\tikzset{
    vertex/.style = {
        circle,
        draw,
        outer sep = 3pt,
        inner sep = 3pt,
    },edge/.style = {->,> = latex'}
}
\def\diag{\mathop{\rm diag}}
\def\adj{\mathop{\rm adj}}
\def\Diag{\mathop{\rm Diag}}
\def\rank{\mathop{\rm rank}}
\def\span{\mathop{\rm span}}
\newcommand{\rr}{\mathbb{R}}
\newcommand{\1}{\mathbf{1}}
\def\a{\alpha}
\def\det{{\rm det}}
\def\tr{{\rm trace}}
\def\csum {{\rm cofsum}}
\def\z{\mathbf{Z}}
\def\zl{\z(\mathcal{L})}
\newtheorem{thm}{Theorem}
\newtheorem{corollary}{Corollary}
\newtheorem{exmm}{Example}
\newtheorem{lem}{Lemma}
\newtheorem{definition}{Definition}
\begin{document}
\begin{center}
\begin{large}
 Resistance matrices of balanced directed graphs
\end{large}
\end{center}
\begin{center}
R. Balaji, R.B. Bapat and Shivani Goel \\
\today
\end{center}

\begin{abstract}
Let $G$ be a strongly connected and balanced directed graph. The Laplacian matrix of $G$ is then the matrix (not necessarily symmetric) $L:=D-A$, where $A$ is the adjacency matrix of $G$ and $D$ is the diagonal matrix such that the row sums and the column sums of $L$ are equal to zero.
Let $L^\dag=[l^{\dag}_{ij}]$ be the Moore-Penrose inverse of $L$.
We define the resistance between any two vertices $i$ and $j$ of $G$ by $r_{ij}:=l^{\dag}_{ii}+l^{\dag}_{jj}-2l^{\dag}_{ij}$.  
In this paper, we derive some interesting properties of the resistance and the corresponding resistance matrix $[r_{ij}]$. 
\end{abstract}

{\bf Keywords.}Balanced directed graph, Laplacian matrix, Moore-Penrose inverse, cofactor sums

{\bf AMS CLASSIFICATION.} 05C50

\section{Introduction}
Let $G=(V,E)$ be a simple connected graph with finite set of vertices $V=\{1,\dotsc,n\}$ and edge set $E$, the set of undirected edges.
To each edge $(i,j) \in E$ assign a weight $w_{ij}$ which is a positive number. 
If $i,j \in V$, define $A:=[a_{ij}]$ where \begin{equation*}
a_{ij} :=\begin{cases}
w_{ij} & (i,j)\in E \\
0 & \text{otherwise}.
\end{cases}
\end{equation*}
Define $D:=\Diag(A \1)$, where $\1$ is the column vector of all ones in $\rr^n$.
The Laplacian matrix of $G$ is then the symmetric matrix $S:=D-A$. If $x \in \rr^n$, then it can be verified that
 \[x^TSx=\sum_{(i,j) \in E} a_{ij} (x_{i}-x_{j})^2 ,\] and hence
$S$ is positive semidefinite with null-space $\span\{\1\}$. The algebraic connectivity of $G$ is the second smallest eigenvalue of the Laplacian matrix $S$ and the associated eigenvector is called the Fiedler vector which  is used to bisect the graph into two connected partitions based on the sign of its components, see Fiedler \cite{fac}.
We shall denote the Moore-Penrose inverse of $S$ by $S^\dag$ and its entries by  $s_{ij}^{\dagger}$.  
To define the distance between any two vertices $i$ and $j$ in $G$, it is natural to consider the length of the shortest path connecting them. This is the classical distance and we shall denote it by $d_{ij}$.  The function $f:V \times V \to \rr $ defined by $f(i,j):=d_{ij}$ is a metric on the vertex set $V$. There are several reasons why the shortest distance $d_{ij}$ is important. In chemistry, the classical distance $d_{ij}$ is used to represent the structure of a molecule as a metric space: see \cite{gutman} and references therein.  
Here is another application in a data communication problem \cite{GP}.
If $v$ is a vertex of $G$, and $N$ is a natural number, define $$A(v):=(a_1,\dotsc,a_N), \mbox{where}~~ a_{i}\in \{0,1,*\}.$$ 
Let $$\rho(A(v),A(v')):=|\{\nu: \{a_{\nu},a_{\nu}^{'}\}=\{0,1\}|~~\forall v,v' \in V.$$ It is known that for some large $N$, there exists 
a function $\rho$ such that 
\begin{equation} \label{ir}
\rho(A(v),A(v'))=d_{vv'}~~\forall v,v' \in V. 
\end{equation}
Now the question is to determine the minimum $N$ for which
equation $(\ref{ir})$ holds. A known result states that 
\[N \geq \max\{n_{+},n_{-}\}, \]
 where $n_{+}$ and $n_{-}$ are the number of positive and negative eigenvalues of the symmetric matrix $[d_{ij}]$: see \cite{Gr}.  
Suppose there are multiple paths connecting $i$ and $j$ in $G$. In a network, this may indicate
that the nodes $i$ and $j$ are better communicated. Thus, it makes more sense to define 
a distance between $i$ and $j$ which is shorter than the classical distance $d_{ij}$.
There are several 
other possible metrics that can be defined on the vertex set $V$ of $G$. In a seminal paper, Klein and R\'andic \cite{kr} introduced the resistance distance $R_{ij}$ between any two vertices $i$ and $j$ of $G$. This is defined  via $S^{\dag}$, the
Moore-Penrose inverse of the Laplacian matrix $S$ of $G$: 
\begin{equation} \label{Rdef}
R_{ij}:=s_{ii}^{\dagger} +s_{jj}^{\dag}-2s_{ij}^{\dag}.
\end{equation}
In resistive electrical networks, $R_{ij}$ is interpreted as the effective electrical resistance between the nodes $i$ and $j$ of a network $N$ corresponding to $G$, with resistor of magnitude $w_{ij}$ taken over the edge $(i,j)$ of $N$.  It can be proved that the resistance distance 
is at most the classical distance and  if $G$ is acyclic, then $R_{ij}=d_{ij}$ for all $i$ and $j$. Resistance distance have several interesting properties. 
These are discussed in chapter $9$ of \cite{bapat}.
In this paper, we generalize the concept of resistance distance to directed graphs.
 
 Let $G=(V,E)$ be a simple directed graph with vertex set $V=\{1,\dotsc,n\}$ and edge set $E$ containing  directed edges. We write $(i,j) \in E$ if there is a directed edge from vertex $i$ to vertex $j$.
  If $i$ and $j$ are any two vertices,  we define
\begin{equation*}
a_{ij} =\begin{cases}
1 & (i,j) \in E \\
0 & \text{otherwise}.
\end{cases}
\end{equation*}
 The matrix $A:=[a_{ij}]$ will be called the adjacency matrix of $G$.
The indegree and the outdegree of a vertex $k$ is 
the sum of all the entries in the $k^{\rm th}$ column and the $k^{\rm th}$ row of the adjacency matrix $A$. 
A vertex $j$ in $V$ is said to be balanced if its indegree and the outdegree are equal. 
Now the graph is said to be balanced if all the vertices are balanced.
Recall that a directed graph is strongly connected, if each pair of vertices is connected by a directed path.
In the sequel, we assume that 
$G$ is a strongly connected and balanced directed graph. The Laplacian of $G$ is now defined by $L:=\Diag(A \1)-A$.
The algebraic connectivity concept is generalized to directed graphs via this definition of the Laplacian matrix and have many other applications like in networks of chaotic systems: see \cite{chai}.
We now propose a {\it semi-distance} in directed graphs using the Moore-Penrose inverse of the Laplacian matrix  $L$. 
\begin{definition} \rm
 The resistance between any two vertices $i$ and $j$ in $V$ is defined by 
\begin{equation} \label{rdist}
r_{ij}:=l_{ii}^{\dag} + l_{jj}^{\dag} -2 l_{ij}^{\dag},
\end{equation}

where $l^\dag_{ij}$ is the $(i,j)^{\rm th}$ entry in the Moore-Penrose inverse of $L$.  
\end{definition}
The matrix $R:=[r_{ij}]$ will be called the resistance matrix
of $G$.  The $\it{reversal}$ of $G$ is the directed graph obtained by reversing the orientation of all the edges. The adjacency matrix of the reversal is then the transpose of the matrix $A$, and thus the resistance matrix of the reversal of $G$ is the transpose of $R$. Because $r_{ij}$ and $r_{ji}$ are not equal in general, $r_{ij}$ is not necessarily a metric 
on $V$ and therefore, the resistance matrices we consider here are not symmetric in general. The symmetric part of the Laplacian 
matrix of $G$ defined by $S:=\frac{1}{2}(L+L')$ has a combinatorial interpretation. Define a simple undirected graph $H$ from $G$ as follows. 
Let the vertex set of $H$ be $V$. If $i,j \in V$, then we shall say that  $i$ and $j$ are adjacent in $H$, if $(i,j) \in E$ or $(j,i) \in E$.
Because $G$ is strongly connected, $H$ is connected. Let $F$ be the set of all edges of $H$. Now to each edge $(i,j) \in F$, define $w_{ij}$ as follows:
\begin{equation*}
w_{ij} =\begin{cases}
1  & (i,j) \in E ~~\mbox{and}~~(j,i) \in E \\
\frac{1}{2} & \text{otherwise}.
\end{cases}
\end{equation*}
Now, $S$ is the Laplacian of the weighted graph $H$. Hence for any $x \in \rr^n$, 
\[ x^TLx= x^TSx=\sum_{(i,j) \in F} w_{ij} (x_i-x_j)^2 .\]
Thus, the null-space of $L$  and null-space of $L'$ are equal to $\span\{\1\}$ and $L+L'$ is positive semidefinite. 
To illustrate, we give an example.

\begin{exmm}\label{12} \rm
Consider the directed graph $G$ with six vertices given in Figure \ref{digrapheg}(a). $G$ is strongly connected and balanced.
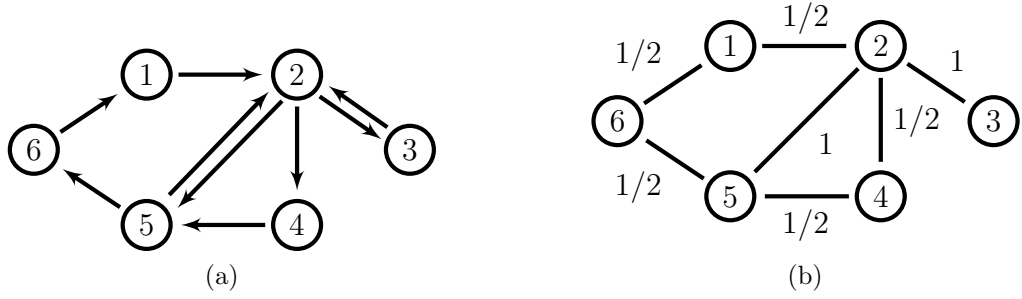
\begin{figure}[!h]
~~~~~~~\subfloat[]{\begin{tikzpicture}[shorten >=1pt, auto, node distance=3cm, ultra thick,
   node_style/.style={circle,draw=black,fill=white !20!,font=\sffamily\Large\bfseries},
   edge_style/.style={draw=black, ultra thick}]
\node[vertex] (1) at  (0,0) {$1$};
\node[vertex] (2) at  (2,0) {$2$};
\node[vertex] (3) at  (3.5,-1) {$3$};
\node[vertex] (4) at  (2,-2) {$4$};
\node[vertex] (5) at  (0,-2) {$5$};
\node[vertex] (6) at  (-1.5,-1) {$6$};
\draw[edge]  (1) to (2);
%\draw[edge]  (2) to (3);
\draw[edge]  (2.3,-0.29) to (3.15,-0.88);
\draw[edge]  (3.2,-0.7) to (2.39,-0.13);
\draw[edge]  (0.3,-1.6) to (1.65,-0.2);
\draw[edge]  (2) to (4);
\draw[edge]  (1.8,-0.35) to (0.385,-1.8);
\draw[edge]  (4) to (5);
\draw[edge]  (5) to (6);
\draw[edge]  (6) to (1);
\end{tikzpicture}}
~~~~~~~~~~~~~~ 
\subfloat[]{\begin{tikzpicture}[shorten >=1pt, auto, node distance=3cm, ultra thick,
   node_style/.style={circle,draw=black,fill=white !20!,font=\sffamily\Large\bfseries},
   edge_style/.style={draw=black, ultra thick}]
\node[vertex] (1) at  (0,0) {$1$};
\node[vertex] (2) at  (2,0) {$2$};
\node[vertex] (3) at  (3.5,-1) {$3$};
\node[vertex] (4) at  (2,-2) {$4$};
\node[vertex] (5) at  (0,-2) {$5$};
\node[vertex] (6) at  (-1.5,-1) {$6$};
\draw  (1) edge node {1/2} (2);
%\draw[edge]  (2) to (3);
\draw (2) edge node {1} (3);
\draw  (2) edge node {1} (5);
\draw  (2) edge node {1/2} (4);
\draw  (4) edge node {1/2} (5);
\draw  (5) edge node {1/2} (6);
\draw  (6) edge node {1/2} (1);
\end{tikzpicture}}
\caption{(a) Graph $G$ and (b) Graph $H$} \label{digrapheg}
\end{figure} 
The adjacency and the Laplacian matrices of $G$ are:
\begin{small}
\begin{equation*}\label{adj&lap}
A = 
\left[
{\begin{array}{rrrrrr}
0 & 1 & 0 & 0 & 0 & 0 \\
0 & 0 & 1 & 1 & 1 & 0 \\
0 & 1 & 0 & 0 & 0 & 0 \\
0 & 0 & 0 & 0 & 1 & 0 \\
0 & 1 & 0 & 0 & 0 & 1 \\
1 & 0 & 0 & 0 & 0 & 0
\end{array}}
\right]~~\mbox{and}~~ 
L = 
\left[
{\begin{array}{rrrrrr}
1 & -1 & 0 & 0 & 0 & 0 \\
0 & 3 & -1 & -1 & -1 & 0 \\
0 & -1 & 1 & 0 & 0 & 0 \\
0 & 0 & 0 & 1 & -1 & 0 \\
0 & -1 & 0 & 0 & 2 & -1 \\
-1 & 0 & 0 & 0 & 0 & 1
\end{array}}
\right].
\end{equation*} 
\end{small}The Moore-Penrose inverse of $L$ is
\begin{small}
\begin{equation*}\label{pinv}
L^\dag = 
\left[
{\begin{array}{ccccccccc}
\frac{5}{9} & \frac{1}{18} & -\frac{1}{9} & -\frac{1}{9} & -\frac{1}{9} & -\frac{5}{18} \\
-\frac{5}{18} & \frac{2}{9} & \frac{1}{18} & \frac{1}{18} & \frac{1}{18} & -\frac{1}{9} \\
-\frac{4}{9} & \frac{1}{18} & \frac{8}{9} & -\frac{1}{9} & -\frac{1}{9} & -\frac{5}{18} \\
-\frac{7}{36} & -\frac{7}{36} & -\frac{13}{36} & \frac{23}{36} & \frac{5}{36} & -\frac{1}{36} \\
-\frac{1}{36} & -\frac{1}{36} & -\frac{7}{36} & -\frac{7}{36} & \frac{11}{36} & \frac{5}{36} \\
\frac{7}{18} & -\frac{1}{9} & -\frac{5}{18} & -\frac{5}{18} & -\frac{5}{18} & \frac{5}{9}
\end{array}}
\right].
\end{equation*}
\end{small}The resistance matrix $R=[r_{ij}]=[l^{\dag}_{ii} +l^\dag_{jj}-2l^\dag_{ij}]$ is given by
\begin{small}
\begin{equation*}\label{res}
R = 
\left[
{\begin{array}{rrrrrr}
0 & \frac{2}{3} & \frac{5}{3} & \frac{17}{12} & \frac{13}{12} & \frac{5}{3} \\
\frac{4}{3} & 0 & 1 & \frac{3}{4} & \frac{5}{12} & 1 \\
\frac{7}{3} & 1 & 0 & \frac{7}{4} & \frac{17}{12} & 2 \\
\frac{19}{12} & \frac{5}{4} & \frac{9}{4} & 0 & \frac{2}{3} & \frac{5}{4} \\
\frac{11}{12} & \frac{7}{12} & \frac{19}{12} & \frac{4}{3} & 0 & \frac{7}{12} \\
\frac{1}{3} & 1 & 2 & \frac{7}{4} & \frac{17}{12} & 0
\end{array}}
\right].
\end{equation*}
\end{small}The undirected graph $H$ obtained from $G$  is given in Figure \ref{digrapheg}(b). The Laplacian matrix $S$ of $H$ is given by
\begin{small}
\begin{equation*}
S = 
\left[
{\begin{array}{rrrrrr}
1 & -\frac{1}{2} & 0 & 0 & 0 & -\frac{1}{2} \\
-\frac{1}{2} & 3 & -1 & -\frac{1}{2} & -1 & 0 \\
0 & -1 & 1 & 0 & 0 & 0 \\
0 & -\frac{1}{2}& 0 & 1 & -\frac{1}{2} & 0 \\
0 & -1 & 0 & -\frac{1}{2} & 2 & -\frac{1}{2} \\
-\frac{1}{2} & 0 & 0 & 0 & -\frac{1}{2}& 1
\end{array}}
\right].
\end{equation*}
\end{small}
It can be verified that $S = \frac{1}{2} (L+L')$. 
\end{exmm}

Suppose $G'=(V,F)$ is a simple undirected and connected graph. Let $[R_{ij}]$
be the resistance matrix of $G'$, where $R_{ij}$ is defined in $(\ref{Rdef})$.
Now $[R_{ij}]$ is the resistance matrix of a strongly connected and balanced directed graph.
To see this, we proceed as follows.
Let $L$ be the Laplacian matrix of $G'$.
 From the edge set $F$, we shall define
 a set of directed edges.  For each edge $(i,j) \in F$, define two directed edges, viz,
$(i,j)$ and $(j,i)$ and let 
$E'$ be the set of all such directed edges.
Then the directed graph $G':=(V,E')$ is strongly connected and balanced. It can be easily seen that the adjacency matrices of $G$ and $G'$ are equal and hence their Laplacian matrices are equal. This means that between any two vertices $i$ and $j$, the resistance distance in $G$ and the resistance in $G'$ defined by $(\ref{Rdef})$ and $(\ref{rdist})$, respectively are same. To illustrate, we give an example.

\begin{exmm}\label{conngraph}\rm
Let $G$ be the graph with five vertices given in Figure \ref{conngrapheg}(a).
\begin{figure}[!h]
~~~~~~~\subfloat[]{\begin{tikzpicture}[shorten >=1pt, auto, node distance=3cm, ultra thick,
   node_style/.style={circle,draw=black,fill=white !20!,font=\sffamily\Large\bfseries},
   edge_style/.style={draw=black, ultra thick}]
\node[vertex] (1) at  (1,0) {$1$};
\node[vertex] (2) at  (3.5,-1) {$2$};
\node[vertex] (3) at  (2,-2.5) {$3$};
\node[vertex] (4) at  (0,-2.5) {$4$};
\node[vertex] (5) at  (-1.5,-1) {$5$};
\draw  (1) to (2);
\draw  (1) to (3);
\draw  (1) to (4);
\draw  (2) to (3);
\draw  (3) to (4);
\draw  (4) to (5);
\draw  (5) to (1);
\end{tikzpicture}}
~~~~~~~~~~~~~~
\subfloat[]{\begin{tikzpicture}[shorten >=1pt, auto, node distance=3cm, ultra thick,
   node_style/.style={circle,draw=black,fill=white !20!,font=\sffamily\Large\bfseries},
   edge_style/.style={draw=black, ultra thick}]
\node[vertex] (1) at  (1,0) {$1$};
\node[vertex] (2) at  (3.5,-1) {$2$};
\node[vertex] (3) at  (2,-2.5) {$3$};
\node[vertex] (4) at  (0,-2.5) {$4$};
\node[vertex] (5) at  (-1.5,-1) {$5$};
\draw[edge]  (1) to (2);
\draw[edge]  (3.2,-0.72) to (1.4,0);
\draw[edge]  (1) to (3);
\draw[edge]  (2,-2) to (1.3,-0.3);
\draw[edge]  (1) to (4);
\draw[edge]  (0,-2) to (0.7,-0.3);
\draw[edge]  (5) to (1);
\draw[edge]  (0.6,0) to (-1.25,-0.72);
\draw[edge]  (2) to (3);
\draw[edge]  (2.4,-2.36) to (3.34,-1.41);
\draw[edge]  (3) to (4);
\draw[edge]  (0.4,-2.65) to (1.6,-2.65);
\draw[edge]  (4) to (5);
\draw[edge]  (-1.3,-1.4) to (-0.4,-2.35);
\end{tikzpicture}}
\caption{(a) Graph $G$ and (b) Graph $G'$} \label{conngrapheg}
\end{figure}
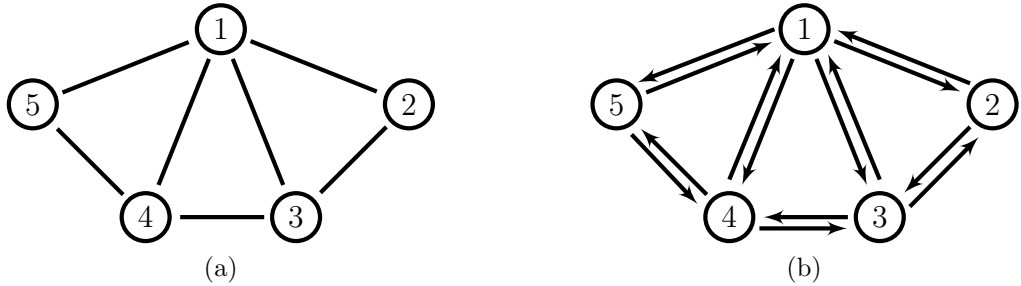 
The directed graph $G'$ constructed from $G$ is shown in Figure \ref{conngrapheg}(b). The adjacency and the Laplacian matrices of $G$ and $G'$ are
given by 
\begin{small}
\begin{equation*}
A = 
\left[
{\begin{array}{rrrrrr}
0 & 1 & 1 & 1 & 1 \\
1 & 0 & 1 & 0 & 0 \\
1 & 1 & 0 & 1 & 0 \\
1 & 0 & 1 & 0 & 1 \\
1 & 0 & 0 & 1 & 0
\end{array}}
\right]~~
\text{and}~~
L = 
\left[
{\begin{array}{rrrrrr}
4 & -1 & -1 & -1 & -1 \\
-1 & 2 & -1 & 0 & 0 \\
-1 & -1 & 3 & -1 & 0 \\
-1 & 0 & -1 & 3 & -1 \\
-1 & 0 & 0 & -1 & 2
\end{array}}
\right].
\end{equation*}
\end{small}
\end{exmm}

\subsection{Results obtained in the paper}
\begin{itemize}
\item In our first result, we show that the resistance $r_{ij}$ defined in $(\ref{rdist})$ has the following properties.
\begin{enumerate}
 \item[{\rm (i)}] If $i$ and $j$ are any two distinct vertices of $G$, then $r_{ij}> 0$. 
  \item[{\rm (ii)}] If $i,j,k$ are any three vertices, then
\[r_{ik} \leq r_{ij}+r_{jk}~~~\forall i,j,k.  \]
\end{enumerate}

\item In our next result, we compute an identity for the
inverse of the resistance matrix $[r_{ij}]$. The motivation for obtaining this identity starts from a classical result of Graham and Lov\'{a}sz \cite{Gr}. This states the following.
\begin{thm} \label{GL}
Let $T$ be a tree with $V(T)=\{1,\dotsc,n\}$. Let $d_{ij}$ be 
the length of the shortest path between vertices $i$ and $j$, 
and $L$ be the Laplacian of $T$. Set $D:=[d_{ij}]$. Then,
\[D^{-1}=-\frac{1}{2} L + \frac{1}{2(n-1)}\tau \tau', \]
where $\tau=(2-\delta_1,\dotsc,2-\delta_n)'$ and $\delta_i$ is the degree of the vertex $i$.
\end{thm}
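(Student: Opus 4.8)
The plan is not to invert $D$ directly but to guess the answer and check it: I will show that the matrix $M:=-\tfrac12 L+\tfrac{1}{2(n-1)}\tau\tau'$ satisfies $DM=I$, which, $D$ being square, gives $D^{-1}=M$. (Here $n\ge2$, so $n-1\ge1$.) Everything reduces to two identities valid for every tree $T$ on $n$ vertices,
\[
(\mathrm{a})\quad LD=\tau\1'-2I,\qquad\qquad (\mathrm{b})\quad D\tau=(n-1)\1,
\]
together with the trivial remark that $\1'\tau=\sum_i(2-\delta_i)=2n-2(n-1)=2$ since $\sum_i\delta_i=2|E(T)|=2(n-1)$. Transposing (a) (both $L$ and $D$ are symmetric) gives the equivalent form $DL=\1\tau'-2I$.

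Granting (a) and (b), the verification is immediate:
\[
DM=-\tfrac12\,DL+\tfrac{1}{2(n-1)}(D\tau)\tau'
  =-\tfrac12(\1\tau'-2I)+\tfrac12\,\1\tau'=I ,
\]
where the two middle equalities use $DL=\1\tau'-2I$ and $D\tau=(n-1)\1$ respectively. So the whole content of the theorem lies in the two combinatorial identities, and I expect (a) to be the only real obstacle; (b) and the rest are bookkeeping.

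To prove (a), fix indices $i,j$ and expand $(LD)_{ij}=\delta_i d_{ij}-\sum_{k\sim i}d_{kj}$. If $i=j$ this equals $-\sum_{k\sim i}d_{ki}=-\delta_i$, which is exactly the $(i,i)$ entry $\tau_i-2=-\delta_i$ of $\tau\1'-2I$. If $i\ne j$, acyclicity of $T$ enters: exactly one neighbour $k_0$ of $i$ lies on the unique $i$--$j$ path, and for it $d_{k_0 j}=d_{ij}-1$, whereas every other neighbour $k$ of $i$ has $d_{kj}=d_{ij}+1$ (a $k$--$j$ path must run through $i$, else $T$ has a cycle). Hence $\sum_{k\sim i}d_{kj}=(d_{ij}-1)+(\delta_i-1)(d_{ij}+1)=\delta_i d_{ij}+\delta_i-2$, so $(LD)_{ij}=2-\delta_i=\tau_i$, which is the off--diagonal entry of $\tau\1'-2I$. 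This ``one neighbour one step closer to $j$, all others one step farther'' dichotomy is precisely where treeness is used, and is the heart of the argument.

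For (b), I would deduce it from (a): right--multiplying (a) by $\tau$ gives $L(D\tau)=\tau(\1'\tau)-2\tau=2\tau-2\tau=0$, so $D\tau$ lies in the null space of $L$, which is $\span\{\1\}$; write $D\tau=c\1$. To evaluate $c$ I would compute a single coordinate directly: $(D\tau)_i=2\sum_j d_{ij}-\sum_j d_{ij}\delta_j$, and rooting $T$ at $i$ and summing over its edges (the edge joining a non-root vertex $q$ to its parent $p$ contributes $d_{ip}+d_{iq}=2d_{ip}+1$) gives $\sum_j d_{ij}\delta_j=2\sum_j d_{ij}-(n-1)$, whence $(D\tau)_i=n-1$ and $c=n-1$. (Alternatively, (a) and (b) can be proved together by induction on $n$ via deletion of a pendant vertex; the pendant step is routine.) Substituting (a) and (b) into the displayed computation completes the proof.
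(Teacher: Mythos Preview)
Your argument is correct. The overall skeleton---prove $LD+2I=\tau\1'$, deduce that $D\tau$ is a scalar multiple of $\1$, identify the scalar, then verify the inverse by multiplication---is exactly the skeleton the paper uses (Lemma~\ref{3}(iii),(v) and the proof of Theorem~\ref{11}, of which Theorem~\ref{GL} is declared a special case). The genuine difference is in how identity~(a) is obtained. You prove it by direct tree combinatorics: for $i\ne j$ exactly one neighbour of $i$ is one step closer to $j$ and the rest are one step farther, which is where acyclicity enters. The paper instead never touches paths: it writes $R=\diag(L^{\dag})J+J\diag(L^{\dag})-2L^{\dag}$ and uses $LL^{\dag}=I-\tfrac1n J$ and $LJ=0$ to get $LR=\tau\1'-2I$ by pure matrix algebra. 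Your route is more elementary and self-contained for trees but is tree-specific; the paper's Moore--Penrose route is uniform across connected graphs and extends verbatim to the balanced directed setting. For the constant in~(b) the paper computes $\tau'R\tau$ via Lemma~\ref{3}(vi), while you compute a single coordinate $(D\tau)_i$ directly; both are fine.

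One small cosmetic point in your edge-sum for~(b): with $p$ the parent of $q$, summing $d_{ip}+d_{iq}=2d_{iq}-1$ over all non-root $q$ gives $\sum_j d_{ij}\delta_j=2\sum_j d_{ij}-(n-1)$ in one line, whereas your expression $2d_{ip}+1$ (also correct) needs an extra step because a vertex can be the parent of several children. Either way the conclusion $(D\tau)_i=n-1$ stands.
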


Theorem $\ref{GL}$ is extended to connected graphs in \cite{rbbres} for resistance matrices. 
\begin{thm} \label{invres}
Let $G$ be a simple connected graph with vertex set $V=\{1,\dotsc,n\}$ and edge set $E$. Let $S$ be the Laplacian of $G$ and $R_{ij}$ be the resistance distance defined in $(\ref{Rdef})$. Define $\widetilde{R}:=[R_{ij}]$.
Then,
\[\widetilde{R}^{-1}=-\frac{1}{2}S+ \frac{1}{\tau'\widetilde{R}\tau} \tau \tau', \]
where $\tau_{i}=2- \sum_{(i,j) \in E} R_{ij}$. 
\end{thm}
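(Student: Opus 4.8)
The plan is to exhibit the inverse explicitly: set
\[ M \;:=\; -\tfrac{1}{2} S \;+\; \frac{1}{\tau'\widetilde R\tau}\,\tau\tau' , \]
and check that $\widetilde R M = I$. Since $\widetilde R$ and $M$ are square, this one identity already forces $M=\widetilde R^{-1}$, so there is no need to assume in advance that $\widetilde R$ is nonsingular. Throughout, write $g\in\rr^n$ for the vector with $g_i=s^\dagger_{ii}$ and $\delta_i$ for the degree of vertex $i$, so that $\widetilde R=\1 g'+g\1'-2S^\dagger$, and use the standard facts $S\1=S^\dagger\1=0$ and $SS^\dagger=S^\dagger S=I-\tfrac{1}{n}\1\1'$.

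The crux — and the step I expect to need the most care — is the identity $\tau=Sg+\tfrac{2}{n}\1$. To prove it, fix a vertex $i$ and expand
\[ \sum_{(i,j)\in E}R_{ij}\;=\;\delta_i s^\dagger_{ii}+\sum_{j\sim i}s^\dagger_{jj}-2\sum_{j\sim i}s^\dagger_{ij}, \]
where $j\sim i$ abbreviates $(i,j)\in E$. The $(i,i)$ entry of $SS^\dagger=I-\tfrac{1}{n}\1\1'$ reads $\delta_i s^\dagger_{ii}-\sum_{j\sim i}s^\dagger_{ij}=1-\tfrac{1}{n}$; substituting this to eliminate $\sum_{j\sim i}s^\dagger_{ij}$ yields $\sum_{(i,j)\in E}R_{ij}=-\delta_i s^\dagger_{ii}+\sum_{j\sim i}s^\dagger_{jj}+2-\tfrac{2}{n}$, hence $\tau_i=\delta_i s^\dagger_{ii}-\sum_{j\sim i}s^\dagger_{jj}+\tfrac{2}{n}=(Sg)_i+\tfrac{2}{n}$. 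Summing over $i$ and using $\1'(Sg)=(S\1)'g=0$ gives the normalization $\1'\tau=2$.

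With these two facts the remaining verification is routine bookkeeping with the two projection identities. First, from $\1'S=0$ and $S^\dagger S=I-\tfrac{1}{n}\1\1'$ one gets $\widetilde R S=\1(Sg)'-2I+\tfrac{2}{n}\1\1'$; replacing $Sg$ by $\tau-\tfrac{2}{n}\1$ collapses this to $\widetilde R\bigl(-\tfrac{1}{2} S\bigr)=I-\tfrac{1}{2}\1\tau'$. Second, $\widetilde R\tau=(g'\tau)\1+(\1'\tau)g-2S^\dagger\tau$, and since $S^\dagger\tau=S^\dagger Sg+\tfrac{2}{n} S^\dagger\1=g-\tfrac{1}{n}(\1'g)\1$, this simplifies to $\widetilde R\tau=\bigl(g'\tau+\tfrac{2}{n}\1'g\bigr)\1=:k\1$, a constant vector. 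Consequently $\tau'\widetilde R\tau=k\,\1'\tau=2k$; a short computation (using $\tau'S^\dagger\tau=g'Sg$, which falls out of $SS^\dagger S=S$ and $S^\dagger\1=0$) gives $\tau'\widetilde R\tau=2\,g'Sg+\tfrac{8}{n}\tr(S^\dagger)$, which is strictly positive because $S\succeq0$ and $\tr(S^\dagger)>0$. Hence $c:=1/(\tau'\widetilde R\tau)$ is well defined with $ck=\tfrac{1}{2}$, so $\widetilde R(c\,\tau\tau')=ck\,\1\tau'=\tfrac{1}{2}\1\tau'$, and adding the two contributions, $\widetilde R M=\bigl(I-\tfrac{1}{2}\1\tau'\bigr)+\tfrac{1}{2}\1\tau'=I$, as required.

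In short, the whole proof pivots on recognizing that the combinatorial correction vector $\tau$ equals $Sg$ up to the additive constant $\tfrac{2}{n}\1$; once that is established, computing $\widetilde R M$ is mechanical. The one point not to gloss over is that the formula only makes sense if $\tau'\widetilde R\tau\neq0$, which is precisely the positivity $\tau'\widetilde R\tau=2g'Sg+\tfrac{8}{n}\tr(S^\dagger)>0$ obtained above.
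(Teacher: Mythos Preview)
Your proof is correct and follows essentially the same route as the paper. The paper actually obtains this theorem as the symmetric special case of its directed-graph result (Theorem~\ref{invr}), whose proof rests on precisely the identities you isolate: $\tau = Sg + \tfrac{2}{n}\1$ (your ``crux'' is the paper's Lemma~\ref{3}(i) with $\tilde X\1$ in place of $g$, which agree since $L\1=0$), $\1'\tau=2$, $\widetilde R\tau$ a scalar multiple of $\1$, and the positivity $\tau'\widetilde R\tau = 2g'Sg + \tfrac{8}{n}\tr(S^\dagger)>0$; the only cosmetic difference is that the paper verifies $(\text{claimed inverse})\cdot R=I$ while you verify $\widetilde R\cdot M=I$.
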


Motivated by the above two results, we find the following inverse formula for the resistance matrix $[r_{ij}]$. 
\begin{thm} \label{invr}
Let $G=(V,E)$ be a strongly connected and balanced directed graph. 
Let $r_{ij}$ be the resistance between the vertices $i$ and $j$ defined in $(\ref{rdist})$ and $R:=[r_{ij}]$. Then,
\begin{equation*}
R^{-1} = - \frac{1}{2} L+ \frac{1}{\tau'R \tau} (\tau ({\tau}'+1'\diag(L^\dag)M)),
\end{equation*}
where $M=L-L'$, and $\tau_{i}:=2-\sum_{(i,j) \in E} r_{ji}$.
\end{thm}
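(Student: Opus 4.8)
The plan is to prove the identity by direct verification: I will exhibit the claimed right‑hand side $B$ and check $RB=I$. The first move is to pass to matrix notation. Let $u:=\diag(L^\dag)$ denote the column vector of diagonal entries of $L^\dag$; then $(\ref{rdist})$ says precisely that $R=\1u'+u\1'-2L^\dag$. I will also record the structural facts forced by $G$ being strongly connected and balanced: the null spaces of $L$ and of $L'$ are both $\span\{\1\}$, so $\Range(L)=\Range(L')=\1^{\perp}$, and hence $L^\dag\1=0$, $\1'L^\dag=0$, and $L^\dag L=LL^\dag=I-\tfrac1n\1\1'$ (both one‑sided products being the orthogonal projection onto $\1^{\perp}$). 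Together with $\1'L=\1'L'=0$, these are the only properties of $L$ that will enter.

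The second step, which I expect to be the main obstacle, is to turn the combinatorial definition of $\tau$ into the usable algebraic form $\tau=\tfrac2n\1+Lu$. Writing $D:=\Diag(A\1)$ so that $A=D-L$, and using balancedness, I would expand
\[
\sum_{(i,j)\in E}r_{ji}=\sum_j a_{ij}\bigl(u_i+u_j-2l^{\dag}_{ji}\bigr)=d_iu_i+(Au)_i-2\,(AL^\dag)_{ii},
\]
then substitute $(AL^\dag)_{ii}=(DL^\dag-LL^\dag)_{ii}=d_iu_i-1+\tfrac1n$ and $(Au)_i=d_iu_i-(Lu)_i$; the $d_iu_i$ terms cancel and one is left with $\sum_{(i,j)\in E}r_{ji}=2-\tfrac2n-(Lu)_i$, i.e.\ $\tau=\tfrac2n\1+Lu$. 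The delicate points are getting the $A=D-L$ substitution right and recognizing $\sum_j a_{ij}l^{\dag}_{ji}$ as the $i$‑th diagonal entry of $AL^\dag$. Two consequences I will use at once are $\tau'=\tfrac2n\1'+u'L'$ and $\1'\tau=2$ (the latter since $\1'Lu=0$).

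With this in hand, a short computation from $R=\1u'+u\1'-2L^\dag$, using $L^\dag\1=0$, $\1'Lu=0$ and $L^\dag Lu=u-\tfrac1n(\1'u)\1$, gives $R\tau=\alpha\1$ with $\alpha:=\tfrac4n\1'u+u'Lu$, so $\tau'R\tau=\alpha\,\1'\tau=2\alpha$. I would then note that $\tau'R\tau\neq 0$ — equivalently that $R$ is invertible — since $\tau'R\tau=\tfrac8n\tr(L^\dag)+2u'Lu$ with $u'Lu=u'\tfrac12(L+L')u\ge 0$ by positive semidefiniteness of $L+L'$, and $\tr(L^\dag)>0$ (the nonzero eigenvalues of $L$ have positive real part and occur in conjugate pairs).

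Finally I will assemble the verification. From the matrix form of $R$ together with $\1'L=0$ and $L^\dag L=I-\tfrac1n\1\1'$ one gets $RL=\1(u'L)-2I+\tfrac2n\1\1'$. Writing $B$ for the claimed expression for $R^{-1}$ and noting $\1'\diag(L^\dag)M=u'M$ with $M=L-L'$, I compute, using $R\tau=\alpha\1$ and $\tau'R\tau=2\alpha$,
\[
RB=-\tfrac12RL+\tfrac1{2\alpha}(R\tau)(\tau'+u'M)=I+\1\Bigl(-\tfrac1n\1'-\tfrac12u'L+\tfrac12\tau'+\tfrac12u'M\Bigr).
\]
Substituting $\tau'=\tfrac2n\1'+u'L'$ and $u'M=u'L-u'L'$ into the parenthesis makes every term cancel, so $RB=I$ and $B=R^{-1}$, as claimed. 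The only places where anything beyond routine linear algebra is required are the identity $\tau=\tfrac2n\1+Lu$ of the second step and the nonvanishing of $\tau'R\tau$; once the three projection identities for $L^\dag$ are in place, the rest is mechanical.
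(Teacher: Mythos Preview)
Your proof is correct and follows essentially the same route as the paper's: both hinge on the identity $\tau=\tfrac{2}{n}\1+Lu$ (the paper writes it as $\tau=L\tilde{X}\1+\tfrac{2}{n}\1$ with $\tilde{X}=\diag(L^\dag)+\tfrac{1}{n}I$, which is the same thing), the computation of $RL$, the fact that $R\tau$ is a multiple of $\1$, and the positivity of $\tau'R\tau$, after which one multiplies out to get $I$. The only cosmetic differences are that the paper verifies $BR=I$ rather than $RB=I$ and obtains $R\tau\in\span\{\1\}$ via $LR\tau=0$ and the null-space of $L$, whereas you compute $R\tau$ directly.
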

Since the resistance matrix of a connected graph can be written as a resistance matrix of a strongly connected and balanced directed graph,  Theorem \ref{GL} and \ref{invres} are special cases of Theorem \ref{invr}. Using Theorem \ref{invr}, we find a formula for computing $\det(R)$.
\item In our final result, we investigate the sum of all the cofactors
in an $s \times s$ submatrix of $R=[r_{ij}]$. The motivation for this consideration comes from
an alternate method to compute the resistance distance defined in (\ref{Rdef}).  This method gives an elegant formula to compute $R_{ij}$:
\begin{equation} \label{Rij}
R_{ij}=\frac{1}{\delta} {\det(S (\{i,j\},\{i,j\}))}, 
\end{equation}
where $S(\{i,j\},\{i,j\})$ is the principal submatrix of $S$ obtained by deleting rows and columns indexed by $\{i,j\}$ and $\delta$ is the number of spanning trees in $G$.
A far reaching generalization of
(\ref{Rij}) is obtained in \cite{bs}. This is stated below.
\begin{thm} \label{csumR}
Let $G$ be a connected graph with vertex set $\{1,2,\ldots,n\}$. Let $S$ be the Laplacian matrix of $G$ and $\widetilde{R}:=[R_{ij}]$ its resistance matrix. Let $\Omega_1,\Omega_2 \subset \{1,2,\ldots,n\}$ be non-empty, and let $ |\Omega_1| = |\Omega_2|$. Put $\eta:=|\Omega_1|$. Suppose $\alpha(\Omega_1)$ and $\alpha(\Omega_2)$ are the sum of all the elements in $\Omega_1$ and $\Omega_2$, respectively.
Let $S[\Omega_1,\Omega_2]$ denote the $\eta \times \eta$ submatrix of $S$ with rows and columns 
indexed by $\Omega_1$ and $\Omega_2$, respectively, and $\widetilde{R}[\Omega_2^{c},\Omega_1^{c}]$ be the 
$(n-\eta) \times (n-\eta)$ submatrix of $R$ with rows and columns indexed by $\Omega_2^c$ and 
$\Omega_1^c$, respectively. 
Then,
\begin{equation} \label{csumRtilde}
 \csum(\widetilde{R}[\Omega_1,\Omega_2]) = (-1)^{\alpha(\Omega_1)+\alpha(\Omega_2)+\eta-1}  \frac{2^{\eta-1}}{\delta} \det(S[\Omega_2^c,\Omega_1^c]),
 \end{equation}
where $\delta$ is the number of spanning trees in $G$.
\end{thm}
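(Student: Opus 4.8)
The plan is to convert the cofactor sum into a single bordered determinant, remove the two rank-one terms carried by the resistance matrix, and then read off the formula from Jacobi's identity on minors of the inverse, applied to a bordered Laplacian. Two preliminary facts drive everything: writing $u:=\diag(S^\dagger)$, one has $\widetilde{R}=\1\,u'+u\,\1'-2S^\dagger$; and for \emph{any} square matrix $B$ of order $\eta$ (singular or not) the identity
\[ \csum(B)=\1'\adj(B)\,\1=-\det\begin{pmatrix} B & \1 \\ \1' & 0 \end{pmatrix} \]
holds. The second fact is what lets $\widetilde{R}[\Omega_1,\Omega_2]$ be handled even when it happens to be singular.

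First I would apply this to $B=\widetilde{R}[\Omega_1,\Omega_2]$ and operate inside the $(\eta+1)\times(\eta+1)$ bordered matrix. Using $\widetilde{R}=\1\,u'+u\,\1'-2S^\dagger$, subtracting $u_l$ times the bordering (last) column from the column indexed by $l\in\Omega_2$ clears the $\1\,u'$ contribution, and subtracting $u_k$ times the bordering (last) row from the row indexed by $k\in\Omega_1$ clears the $u\,\1'$ contribution; the determinant is unchanged and equals $\det\begin{pmatrix} -2S^\dagger[\Omega_1,\Omega_2] & \1 \\ \1' & 0\end{pmatrix}$. Pulling $-2$ out of the $\eta$ rows indexed by $\Omega_1$ and $-\tfrac12$ out of the last column gives
\[ \csum(\widetilde{R}[\Omega_1,\Omega_2]) = (-1)^{\eta}\,2^{\eta-1}\,\det\begin{pmatrix} S^\dagger[\Omega_1,\Omega_2] & \1 \\ \1' & 0 \end{pmatrix}. \]

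Next I would introduce the bordered Laplacian $\mathcal{L}:=\begin{pmatrix} S & \1 \\ \1' & 0\end{pmatrix}$ of order $n+1$. Since $G$ is connected, $\rank S=n-1$ with $\ker S=\span\{\1\}$, so $\mathcal{L}$ is nonsingular; from $SS^\dagger=I-\tfrac1n\1\1'$ and $S^\dagger\1=0$ one checks $\mathcal{L}^{-1}=\begin{pmatrix} S^\dagger & \tfrac1n\1 \\ \tfrac1n\1' & 0\end{pmatrix}$, and $\det\mathcal{L}=-\csum(S)=-n^2\delta$ since by the Matrix--Tree Theorem every cofactor of $S$ equals $\delta$. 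Pulling $\tfrac1n$ out of each border line identifies $\det\begin{pmatrix} S^\dagger[\Omega_1,\Omega_2] & \1 \\ \1' & 0\end{pmatrix}$ with $n^2\det\big(\mathcal{L}^{-1}[\,\Omega_1\cup\{n+1\},\ \Omega_2\cup\{n+1\}\,]\big)$. Jacobi's identity $\det(\mathcal{L}^{-1}[I,J])=(-1)^{\sigma(I)+\sigma(J)}\det(\mathcal{L}[J^c,I^c])/\det\mathcal{L}$, with $\sigma(\cdot)$ the sum of indices and complements taken in $\{1,\dots,n+1\}$, applied with $I=\Omega_1\cup\{n+1\}$ and $J=\Omega_2\cup\{n+1\}$, gives $I^c=\Omega_1^c$, $J^c=\Omega_2^c$, $\mathcal{L}[\Omega_2^c,\Omega_1^c]=S[\Omega_2^c,\Omega_1^c]$, and the sign $(-1)^{\sigma(I)+\sigma(J)}=(-1)^{\alpha(\Omega_1)+\alpha(\Omega_2)}$. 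Substituting back and cancelling $n^2$ against $\det\mathcal{L}$ collapses everything to $(-1)^{\alpha(\Omega_1)+\alpha(\Omega_2)+\eta-1}\,\tfrac{2^{\eta-1}}{\delta}\,\det(S[\Omega_2^c,\Omega_1^c])$, which is the assertion.

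The part I expect to be fiddly is the cumulative sign bookkeeping rather than any single step: the sign in the bordered-determinant identity, the $(-2)^{\eta}$ and $-\tfrac12$ scalings, Jacobi's sign $(-1)^{\sigma(I)+\sigma(J)}$, and the sign of $\det\mathcal{L}$ all have to be reconciled, so I would fix the final sign by testing the formula on a small case such as $K_3$. A secondary point to argue carefully is that the row and column operations in the first step genuinely clear the rank-one terms while leaving the bordering row and column untouched, together with the observation that the bordered-determinant form is exactly what makes $\csum$ of a possibly singular submatrix unambiguous.
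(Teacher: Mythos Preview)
Your argument is correct and follows essentially the same route as the paper: represent $\csum$ as a bordered determinant (the paper's Lemma~\ref{1.4}), strip the rank-one terms $\1 u'+u\1'$ so that only $-2S^\dagger$ survives, and then apply Jacobi's identity to the bordered Laplacian $\mathcal{L}$ with $\mathcal{L}^{-1}$ built from $S^\dagger$ (the paper's Lemma~\ref{1.5} and Theorem~\ref{1.6}). The only cosmetic difference is that you clear the rank-one terms by explicit row/column operations inside the bordered determinant, whereas the paper packages this step as the invariance $\csum(B+x\1')=\csum(B+\1 x')=\csum(B)$ (Lemma~\ref{1.7}) proved via the matrix-determinant lemma; the sign bookkeeping you flag as fiddly indeed comes out to the stated formula.
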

 Equation $(\ref{Rij})$ is a special case of   $(\ref{csumRtilde})$.  This follows by setting 
 $\Omega_1=\Omega_2=\{i,j\}$ and observing that 
$(-1)^{\a(\Omega_1)+\a(\Omega_2)}=1$, $\csum(\widetilde{R} [\Omega_1,\Omega_2])=-2R_{ij}$,
$\eta=2$ and $S[\Omega_2^c,\Omega_1^c]=S(\{i,j\},\{i,j\})$.
In this paper, we generalize Theorem \ref{csumR} to resistance matrices of directed graphs. 
\end{itemize}

\subsection{Outline of the paper}
In section $2$, we mention the preliminaries that are needed for further discussion. 
In section $3$, we discuss the properties of the resistance.
In section $4$, we present the inverse formula stated in Theorem \ref{invr} and illustrate it
by an example. In the final section, we deduce a formula for finding the cofactor sums of the resistance matrix.

\section{Preliminaries}
We now list a few notation used in this paper and gather some tools to prove our results.
\begin{enumerate}
\item[(P1)] Let $\Omega_1$ and $\Omega_2$ be non-empty subsets of $\{1,\dotsc,n\}$. 
If $W$ is an $n \times n$
matrix, then $W[\Omega_1,\Omega_2]$ will be the submatrix of $W$ with rows and columns indexed by $\Omega_1$ and $\Omega_2$, respectively. If $\Omega \subseteq \{1,\dotsc,n\}$ is non-empty, then
$\a(\Omega)$ will denote the sum of all elements in $\Omega$. 

\item[(P2)] The complement of a set $\Omega$ is written $\Omega^c$. The transpose and the Moore-Penrose inverse of a matrix $A$ are denoted by 
$A'$ and $A^{\dag}$, respectively. All vectors are regarded as column vectors.

\item[(P3)] If $A=[a_{ij}]$ is a square matrix, then $\diag(A)$ is the diagonal matrix with diagonal entries equal to $a_{ii}$. If 
$s:=(s_1,s_2,\dotsc,s_n)' \in \rr^n$, then $\Diag(s)$ will be the diagonal matrix with diagonal entries equal to $s_{i}$. 

\item[(P4)] The sum of all the cofactors of an $m \times m$ matrix $A$ is represented by $\csum(A)$. The determinant and the classical adjoint of $A$ are written $\det(A)$ and $\adj(A)$, respectively. 

\item[(P5)] The notation $\1$ will stand for the vector $(1,1,\dotsc,1)'$ in $\rr^n$ and $J:=\1 \1'$. The orthogonal projection onto the hyperplane 
$\{\1\}^{\perp}$ is denoted by $P$. It is easy to observe that $P=I-\frac{1}{n}J$, where $I$ is the $n \times n$ identity matrix.
If $1 \leq m<n$, then the vector of all ones in $\rr^m$ and the $m \times m$ identity matrix will be denoted by $\1_m$ and $I_m$, respectively.

\item[(P6)] The Jacobi's identity on non-singular matrices is the following: \begin{thm}
Let $A$ be an $n \times n$ non-singular matrix. 
Let $\Omega_1$, $\Omega_2 \subset \{1,\dotsc,n\}$ be non-empty such that $|\Omega_1|=|\Omega_2|$.
Then,
\[\det(A^{-1}[\Omega_2^{c},\Omega_1^c])= (-1)^{\alpha(\Omega_2) + \a (\Omega_1)}\frac{1}{\det(A)} {\det(A[\Omega_1,\Omega_2]}). \] 
\end{thm}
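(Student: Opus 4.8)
The statement is Jacobi's identity for minors of an inverse matrix, and the plan is to reduce it to the case $\Omega_1=\Omega_2=\{1,\dots,k\}$ (with $k:=|\Omega_1|$), where it falls out of the Schur complement formula, and then to transport the general case to this one by permuting rows and columns while tracking the determinants of the permutations.

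First I would handle the reduced case. Partition $A$ into $2\times 2$ block form with top-left block $A_{11}:=A[\{1,\dots,k\},\{1,\dots,k\}]$, so that $\Omega_1^c=\Omega_2^c=\{k+1,\dots,n\}$ indexes the bottom-right block, and assume for the moment that $A_{11}$ is invertible. Then the Schur complement $A/A_{11}:=A_{22}-A_{21}A_{11}^{-1}A_{12}$ is invertible, the bottom-right $(n-k)\times(n-k)$ block of $A^{-1}$ equals $(A/A_{11})^{-1}$, and $\det(A)=\det(A_{11})\,\det(A/A_{11})$, whence
\[
\det\bigl(A^{-1}[\Omega_2^c,\Omega_1^c]\bigr)=\frac{1}{\det(A/A_{11})}=\frac{\det(A_{11})}{\det(A)},
\]
which is the claimed identity since here $(-1)^{\alpha(\Omega_1)+\alpha(\Omega_2)}=1$. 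To remove the assumption on $A_{11}$, I would note that both sides are continuous (indeed rational) functions of the entries of $A$ on $\{\det(A)\neq 0\}$ and agree on the dense subset where $A_{11}$ is also invertible, hence agree wherever $A$ is invertible.

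Next I would treat the general case. Choose permutation matrices $P,Q$ so that $B:=QAP$ has, as its first $k$ rows, the rows of $A$ indexed by $\Omega_1$ in increasing order, and, as its first $k$ columns, the columns of $A$ indexed by $\Omega_2$ in increasing order. Then $B[\{1,\dots,k\},\{1,\dots,k\}]=A[\Omega_1,\Omega_2]$, and writing $B^{-1}=P'A^{-1}Q'$ and reading off entries identifies the bottom-right $(n-k)\times(n-k)$ block of $B^{-1}$ with $A^{-1}[\Omega_2^c,\Omega_1^c]$. Applying the reduced case to $B$ then gives $\det\bigl(A^{-1}[\Omega_2^c,\Omega_1^c]\bigr)=\det(A[\Omega_1,\Omega_2])/\det(B)$, and since $\det(B)=\det(Q)\det(P)\det(A)$ the proof comes down to evaluating the sign $\det(Q)\det(P)$.

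That sign is the one delicate point, and getting it right is the main obstacle. I would invoke the standard fact that the permutation sorting a $k$-subset $\{i_1<\cdots<i_k\}$ of $\{1,\dots,n\}$ into the first $k$ positions has sign $(-1)^{(i_1+\cdots+i_k)-(1+2+\cdots+k)}$ — precisely the sign occurring in the Laplace expansion of a determinant along $k$ fixed rows — applied once with the subset $\Omega_1$ and once with $\Omega_2$; since $2(1+2+\cdots+k)=k(k+1)$ is even, the two exponents combine to $\det(Q)\det(P)=(-1)^{\alpha(\Omega_1)+\alpha(\Omega_2)}$. Substituting $\det(B)=(-1)^{\alpha(\Omega_1)+\alpha(\Omega_2)}\det(A)$ into the previous display then yields the asserted formula. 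Apart from this sign, the only thing that needs care is keeping the roles of rows versus columns, and of $\Omega_1^c$ versus $\Omega_2^c$, the right way around in the block identification for $B^{-1}$; everything else is routine. (One could instead argue directly from $A^{-1}=\adj(A)/\det(A)$ together with the classical formula for the minors of $\adj(A)$, but proving that formula costs essentially the same effort.)
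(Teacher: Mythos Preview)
Your proof is correct. The Schur-complement argument for the aligned case $\Omega_1=\Omega_2=\{1,\dots,k\}$, the density argument to drop the invertibility of $A_{11}$, and the reduction by permutation matrices with the sign computation $\det(Q)\det(P)=(-1)^{\alpha(\Omega_1)+\alpha(\Omega_2)}$ are all valid; the block identification $B^{-1}[\{k+1,\dots,n\},\{k+1,\dots,n\}]=A^{-1}[\Omega_2^c,\Omega_1^c]$ is also correctly tracked.

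However, there is nothing to compare against: the paper does not prove this theorem at all. It is stated in the preliminaries (item (P6)) as Jacobi's identity and simply attributed to Brualdi and Schneider \cite{bru}, with no argument given. So your contribution is a self-contained proof of a result the paper merely quotes. That is perfectly fine, and your approach via Schur complements plus permutation bookkeeping is one of the standard routes; the alternative you mention at the end (through $\adj(A)$ and the formula for minors of the adjugate) is the other common one, and indeed costs about the same.
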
 See Brualdi and Schneider \cite{bru}.

\item[(P7)] An $n \times n$ matrix $B$ is called a $\z$-matrix, if every off-diagonal entry of $B$  is non-positive. If $L$ is the Laplacian matrix of a strongly connected and balanced directed graph, then $L$ is a $\z$-matrix. As already noted, $L+L'$ is positive semidefinite, $L\1 = L' \1 = 0$ and $\rank (L) = n-1$. 
\item[(P8)] Suppose $S$ is an $n \times n$ matrix such that $S\1 = S'\1 =0$ and $\rank(S) = n-1$. Then $S^\dag \1 = {S^\dag}'\1 = 0$, $SS^{\dag}=S^{\dag}S=P=I-\frac{1}{n}J$ and all the cofactors of $S$ are equal. If $L$ is a $\z$-matrix such that $L\1 = L' \1 = 0$ and $\rank (L) = n-1$, then we shall write $L \in \zl$. If $L \in \zl$, then it can be verified that $L^\dag+{L^\dag}'$ is positive semidefinite and $\tr(L^\dag) > 0$.

\item[(P9)] Let $A$ be an $n \times n$ matrix. If $u$ and $v$ belong to $\rr^n$, then $\det(A+uv') = \det(A)+v'\adj(A)u$. For a proof, see Lemma $1.1$ in \cite{jiu}.

\item[(P10)] Let $B=[b_{ij}]$ be an $n \times n$ matrix. Then, 
\begin{enumerate}
\item $B$ is row diagonally dominant if for each $i=1,\dotsc,n$ 
\[|b_{ii}| \geq \sum_{\{j:i \neq j\}} |b_{ij}|~~\forall j=1,\dotsc n.\] 
\item $B$ is diagonally dominant of its row entries if 
\[|b_{ii}| \geq |b_{ij}| \]
for each $i=1,\dotsc,n$ and $j \neq i$.
\item $B$ is diagonally dominant of its column entries if $B'$ is diagonally dominant of its row entries. 
\end{enumerate}
By Theorem $2.5.12$ in \cite{horn}, if $B$ is non-singular and row diagonally dominant, then $B^{-1}$ is diagonally dominant of its column entries.
\item [(P11)]  Let $G = (V,E)$ be a directed graph with vertex set $V = \{1,2,...,n\} $. An \emph{oriented spanning tree} of $G$ rooted at vertex $i$ is a spanning subgraph $T$ such that
\begin{enumerate}
\item [(i)] Every vertex $j$ of $T$ such that $j \neq i$ has outdegree $1$.
\item [(ii)] The vertex $i$ has outdegree $0$.
\item [(iii)] $T$ has no oriented cycles. 
\end{enumerate} The matrix-tree theorem for directed graphs (Theorem $1$ in \cite{pat}) is the following. 
\begin{thm}
 Let $G = (V,E)$ be a directed graph with vertex set $V = \{1,2,...,n\} $.  Let  $\kappa(G,i)$ denote the number of oriented spanning trees of $G$ rooted at $i$. If $L$ is the Laplacian matrix of $G$, then 
\[ \kappa(G,i) = \det(L[\{i\}^c,\{i\}^c]).\]
\end{thm}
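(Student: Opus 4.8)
The plan is to establish $\kappa(G,i)=\det\bigl(L[\{i\}^c,\{i\}^c]\bigr)$ by a Cauchy--Binet expansion, mimicking the classical (undirected) matrix-tree theorem but with the asymmetric incidence factorization suited to a digraph. First I would factor the Laplacian as $L=CB'$, where $C$ and $B$ are the $n\times|E|$ matrices whose column indexed by an edge $e=(u,v)\in E$ is, respectively, $\mathbf{e}_u$ and $\mathbf{e}_u-\mathbf{e}_v$ (here $\mathbf{e}_k$ denotes the $k$th standard basis vector of $\rr^n$). A one-line check against $L=\Diag(A\1)-A$ confirms that $(CB')_{pq}$ equals the outdegree of $p$ when $p=q$ and $-a_{pq}$ when $p\neq q$. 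Fixing $i$ and relabelling so that $i=n$, deletion of the $n$th row and column gives $L[\{n\}^c,\{n\}^c]=\widehat{C}\,\widehat{B}'$, where $\widehat{C},\widehat{B}$ are obtained from $C,B$ by deleting the $n$th row, and Cauchy--Binet yields
\[
\det\bigl(L[\{n\}^c,\{n\}^c]\bigr)=\sum_{\substack{S\subseteq E\\|S|=n-1}}\det(\widehat{C}_S)\,\det(\widehat{B}_S),
\]
the sum over $(n-1)$-element edge sets $S$, with $\widehat{C}_S,\widehat{B}_S$ the submatrices on the columns indexed by $S$.

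Next I would isolate the surviving terms. The matrix $\widehat{C}_S$ has a zero column whenever $S$ contains an edge with tail $n$ and two equal columns whenever two edges of $S$ share a tail; hence $\det(\widehat{C}_S)\neq 0$ forces $S$ to consist of $n-1$ edges with pairwise distinct tails, none of which is $n$. Equivalently, each vertex $j\neq n$ is the tail of a unique edge $e_j\in S$ and $n$ is the tail of none, so $S$ is a functional graph in which $n$ is the only sink. Reindexing the columns of \emph{both} submatrices so that $e_j$ becomes the $j$th column turns $\widehat{C}_S$ into $I_{n-1}$, so $\det(\widehat{C}_S)=1$ in this ordering. If $S$ contains a directed cycle (which necessarily avoids $n$), the vectors $\mathbf{e}_u-\mathbf{e}_v$ over the edges of that cycle sum to $0$, so $\widehat{B}_S$ is singular and the term vanishes. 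If $S$ is acyclic, then following outedges from any vertex terminates at $n$, so $S$ is precisely an oriented spanning tree rooted at $n$; and with the column order above $\widehat{B}_S=I_{n-1}-N$, where $N_{kj}=1$ exactly when $e_j=(j,k)$. Ordering the vertices by their distance to $n$ inside $S$ shows $N$ is nilpotent --- it strictly lowers that distance --- so $\det(\widehat{B}_S)=1$. Thus every oriented spanning tree rooted at $n$ contributes $+1$, every other $S$ contributes $0$, and summing gives $\det(L[\{n\}^c,\{n\}^c])=\kappa(G,n)$; undoing the relabelling handles an arbitrary root.

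The step I expect to be the main obstacle is the sign bookkeeping: a priori each surviving in-tree contributes $\det(\widehat{C}_S)\det(\widehat{B}_S)=\pm 1$, and one must exclude the $-1$'s. The device above --- committing to a single global column ordering $(e_1,\dots,e_{n-1})$ keyed to the tails of the edges, which simultaneously makes $\widehat{C}_S$ the identity and $\widehat{B}_S$ the difference of the identity and a nilpotent matrix --- is exactly what pins the sign to $+1$; coordinating the two determinants in this way, rather than evaluating them separately and chasing permutation signs, is the crux. The same argument goes through verbatim for directed multigraphs, reading $E$ as a multiset and ``subset'' as ``sub-multiset''.
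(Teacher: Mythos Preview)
The paper does not supply its own proof of this statement: it is quoted as a preliminary (item (P11)) and attributed to De~Leenheer~\cite{pat}. So there is nothing in the paper to compare your argument against directly.

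That said, your Cauchy--Binet proof is correct. The factorization $L=CB'$ is right (your entrywise check is accurate), the identification of the nonvanishing summands with functional digraphs whose unique sink is the root is standard, and the key sign issue is handled cleanly by your device of permuting the columns of $\widehat{C}_S$ and $\widehat{B}_S$ by the \emph{same} permutation keyed to the tails --- since the two determinants each pick up the same sign under a common column permutation, their product is invariant, and in the tail-indexed ordering you legitimately get $\widehat{C}_S=I_{n-1}$ and $\widehat{B}_S=I_{n-1}-N$ with $N$ nilpotent. One small clarification worth adding: when you say ``ordering the vertices by their distance to $n$ inside $S$ shows $N$ is nilpotent,'' it is simpler (and avoids a second reordering, this time of rows, which would require a fresh sign argument) to note directly that $N\mathbf e_j=\mathbf e_{h(j)}$ (reading $\mathbf e_n=0$), so iterating $N$ walks along the unique directed path from $j$ to $n$ in $S$ and hence $N^{n-1}=0$.

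For context, the cited reference takes a rather different route: De~Leenheer's argument is an elementary induction on the edge set (a deletion/contraction style reduction) rather than a Cauchy--Binet expansion. Your approach is the classical one and arguably more transparent about why each in-tree contributes exactly $+1$; the inductive proof avoids the incidence factorization and Cauchy--Binet entirely but trades that for a case analysis. Either is a perfectly acceptable proof of the preliminary the paper needs.
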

Suppose $G$ is also strongly connected and balanced. Then all the cofactors of $L$ are equal  and therefore $\kappa(G,i)$ is independent of $i$. We denote $\kappa(G,i)$ by $\kappa(G)$ in the rest of the paper.
\end{enumerate}
\section{Properties of the resistance }
To establish the desired properties of the resistance defined in $(\ref{rdist})$, we need the following identity. The proof is omitted as it is a direct verification.
\begin{lem}\label{3.3}
Let $L \in \zl$. Then $L$ can be partitioned as
\[L =\left[
\begin{array}{cccc}
B & -B e \\
-e' B  & e'B e \\
\end{array}
\right],\]
where $B$ is a square matrix of order $n-1$ and $e = \1_{n-1}$ and 
\[L^{\dag} = \left[
\begin{array}{cccc}
B^{-1} - \frac{1}{\displaystyle n} e e' B^{-1} - \frac{1}{\displaystyle n} B^{-1} ee' & - \frac{1}{\displaystyle n} B^{-1} e \\ \\
-\frac{1}{\displaystyle n} e' B^{-1} & 0 \\ 
\end{array}
\right] + \frac{e' B^{-1}e}{\displaystyle n^2} \1\1'.\]
\end{lem}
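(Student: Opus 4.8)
The plan is to read off the block partition from the row- and column-sum conditions, and then to verify the stated formula for $L^{\dag}$ by checking that the matrix $X$ on the right-hand side of the displayed identity satisfies a short list of equations that pin down the Moore--Penrose inverse of $L$.

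\emph{Step 1 (the partition).} Write $L=\begin{bmatrix} B & c\\ r' & d\end{bmatrix}$, with $B$ the leading $(n-1)\times(n-1)$ block, $c$ the last column above the corner, $r'$ the last row to the left of the corner, and $d=L_{nn}$. Since $\1=\begin{bmatrix} e\\ 1\end{bmatrix}$, the equation $L\1=0$ gives $Be+c=0$ and $r'e+d=0$, while $L'\1=0$ gives $e'B+r'=0$; hence $c=-Be$, $r'=-e'B$ and $d=-r'e=e'Be$, which is the asserted block form. It then remains to note that $B$ is invertible; this is forced by $\rank(L)=n-1$: since $L\1=0$ and $\rank(L)=n-1$ we have $\ker(L)=\span\{\1\}$, so a vector $x$ with $Bx=0$ yields $L\begin{bmatrix} x\\ 0\end{bmatrix}=\begin{bmatrix} Bx\\ -e'Bx\end{bmatrix}=0$, whence $\begin{bmatrix} x\\ 0\end{bmatrix}$ is a multiple of $\1$ and therefore $x=0$.

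\emph{Step 2 (characterising $L^{\dag}$).} By (P8) the matrix $L^{\dag}$ satisfies $LL^{\dag}=P=I-\tfrac1n J$ and $\1'L^{\dag}=0$, and conversely these two conditions determine $L^{\dag}$ uniquely: if $LX=P$ and $\1'X=0$, then $L(X-L^{\dag})=0$ forces $X-L^{\dag}=\1 y'$ for some $y$ because $\ker(L)=\span\{\1\}$, and then $\1'(X-L^{\dag})=0$ gives $n\,y'=0$, so $X=L^{\dag}$. Hence it suffices to verify $LX=P$ and $\1'X=0$ for $X$ equal to the right-hand side of the claimed identity (one may of course also check $XL=P$ and $X\1=0$ as a sanity test, but these are not needed).

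\emph{Step 3 (the computation).} Abbreviate $a:=B^{-1}e$, $b':=e'B^{-1}$ and $\beta:=e'B^{-1}e$, so that $Ba=e$, $b'B=e'$, $b'e=e'a=\beta$, $e'e=n-1$, and recall $\1=\begin{bmatrix} e\\ 1\end{bmatrix}$, $J=\begin{bmatrix} ee' & e\\ e' & 1\end{bmatrix}$. In this notation the four blocks of $X$ are $X_{11}=B^{-1}-\tfrac1n e b'-\tfrac1n a e'+\tfrac{\beta}{n^2}ee'$, $X_{12}=-\tfrac1n a+\tfrac{\beta}{n^2}e$, $X_{21}=-\tfrac1n b'+\tfrac{\beta}{n^2}e'$, $X_{22}=\tfrac{\beta}{n^2}$. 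Multiplying the block form of $L$ from Step 1 against $X$ and using $Ba=e$ and $b'B=e'$, the rank-one correction terms cancel in pairs and each of the four blocks of $LX$ collapses to the corresponding block of $P$; a similarly short reckoning (using $b'e=e'a=\beta$ and $e'e=n-1$) gives $\1'X=0$. By Step 2 this yields $X=L^{\dag}$, which is the assertion.

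\emph{Where the work is.} There is no conceptual difficulty here; the points that require attention are (i) observing that $\rank(L)=n-1$ is precisely what makes the leading block $B$ invertible, so that the formula is even well posed; (ii) selecting an economical characterisation of $L^{\dag}$ (the pair $LX=P$, $\1'X=0$) so as to avoid verifying all four Penrose identities by hand; and (iii) organising the block product so that the numerous rank-one pieces built from $a$, $b'$, $\beta$ and $e$ visibly telescope.
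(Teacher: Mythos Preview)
Your argument is correct, and since the paper omits the proof as ``a direct verification,'' your approach is precisely the intended one. The only thing to add is that your reduction to checking $LX=P$ and $\1'X=0$ (rather than all four Penrose conditions) is a clean way to organise the verification.
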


The following theorem is an application of Lemma \ref{3.3}.

\begin{thm}\label{15}
Let $L \in \zl$, $L:=[l_{ij}]$ and $L^\dag:=[l_{ij}^{\dag}]$. Define $r_{ij}:=l^{\dag}_{ii} + l^\dag_{jj}- 2l^\dag_{ij}$. Then,
\begin{enumerate}
\item[\rm(i)] $r_{ij}>0~~\forall i \neq j$.
\item[\rm(ii)] $r_{ik} \leq r_{ij}+r_{jk}~~~\forall i,j,k.$
\end{enumerate}
\end{thm}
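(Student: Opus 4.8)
The plan is to reduce both statements to facts about a symmetric positive semidefinite matrix, so that the classical theory of resistance distances in undirected graphs applies verbatim. The key observation is that $r_{ij}$ as defined in $(\ref{rdist})$ depends on $L^\dag$ only through the quadratic form $(e_i-e_j)'L^\dag(e_i-e_j)$, and since this is a scalar it equals its own symmetric part: $(e_i-e_j)'L^\dag(e_i-e_j) = (e_i-e_j)' \tfrac12(L^\dag + {L^\dag}')(e_i-e_j)$. So if we set $K := \tfrac12(L^\dag + {L^\dag}')$, then $r_{ij} = k_{ii} + k_{jj} - 2k_{ij}$ with $K$ symmetric. By (P8), since $L \in \zl$ we know $L^\dag + {L^\dag}'$ is positive semidefinite, hence $K \succeq 0$; moreover $K\1 = \tfrac12(L^\dag\1 + {L^\dag}'\1) = 0$, again by (P8), so $\1$ lies in the kernel of $K$. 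I would also record that $\rank(K) = n-1$: this follows because $\Range(K) \subseteq \{\1\}^\perp$ gives $\rank(K)\le n-1$, while if $\rank(K) < n-1$ there would be a vector $x \perp \1$ with $Kx = 0$ and hence $x'L^\dag x = 0$, forcing $L^\dag x = 0$ (as $L^\dag+{L^\dag}'\succeq0$ makes its zero set a subspace on which the form vanishes), contradicting $\Range(L^\dag) = \{\1\}^\perp$.

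For part (i), fix $i \neq j$ and write $v := e_i - e_j$. Then $r_{ij} = v'Kv \ge 0$ by positive semidefiniteness, and it remains to rule out equality. If $v'Kv = 0$ then, since $K \succeq 0$, we get $Kv = 0$, so $v \in \ker K = \span\{\1\}$ (using $\rank K = n-1$); but $v = e_i - e_j$ is not a multiple of $\1$ when $i \neq j$ and $n \ge 2$, a contradiction. Hence $r_{ij} > 0$.

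For part (ii), the cleanest route is to exhibit $\sqrt{r_{ij}}$ as a genuine Euclidean distance. Since $K \succeq 0$ we may write $K = U'U$ for some matrix $U$ with columns $u_1,\dots,u_n \in \rr^n$ (e.g. $U = K^{1/2}$). Then
\[
r_{ij} = k_{ii} + k_{jj} - 2k_{ij} = \|u_i\|^2 + \|u_j\|^2 - 2\langle u_i, u_j\rangle = \|u_i - u_j\|^2 .
\]
Thus $\sqrt{r_{ij}} = \|u_i - u_j\|$ is the distance between points $u_i$ and $u_j$ in Euclidean space, so the ordinary triangle inequality gives $\|u_i - u_k\| \le \|u_i - u_j\| + \|u_j - u_k\|$, i.e. $\sqrt{r_{ik}} \le \sqrt{r_{ij}} + \sqrt{r_{jk}}$. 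Squaring and using $2\sqrt{r_{ij}r_{jk}} \ge 0$ yields $r_{ik} \le r_{ij} + r_{jk}$, which is (ii).

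The only real subtlety — and the step I would be most careful about — is the symmetrization trick at the start: one must be sure that $r_{ij}$ genuinely only sees the symmetric part of $L^\dag$, which is immediate once one writes $r_{ij} = (e_i-e_j)'L^\dag(e_i-e_j)$, but it is worth stating explicitly because the non-symmetry of $L^\dag$ is exactly what makes $R = [r_{ij}]$ itself non-symmetric (note $r_{ij}=r_{ji}$ here, even though in the graph setting of the paper $r_{ij}$ denotes $l^\dag_{ii}+l^\dag_{jj}-2l^\dag_{ij}$ which is symmetric in $i,j$ — the asymmetry of $R$ in Example \ref{12} comes from a different, directed definition; for the matrix statement of Theorem \ref{15} the quantity is symmetric and the above argument is complete). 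Everything else is the standard ``resistance distance is a Euclidean metric'' argument, and no directed-graph input beyond (P8) is needed.
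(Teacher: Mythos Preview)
Your argument rests on the identity $r_{ij} = (e_i-e_j)'L^\dag(e_i-e_j)$, and this identity is false when $L^\dag$ is not symmetric. Expanding the quadratic form gives
\[
(e_i-e_j)'L^\dag(e_i-e_j) \;=\; l^\dag_{ii}+l^\dag_{jj}-l^\dag_{ij}-l^\dag_{ji} \;=\; \tfrac12\bigl(r_{ij}+r_{ji}\bigr),
\]
whereas $r_{ij}=l^\dag_{ii}+l^\dag_{jj}-2l^\dag_{ij}$ carries the term $-2l^\dag_{ij}$, not $-l^\dag_{ij}-l^\dag_{ji}$. The expression $l^\dag_{ii}+l^\dag_{jj}-2l^\dag_{ij}$ is \emph{not} symmetric in $i,j$; swapping the indices replaces $l^\dag_{ij}$ by $l^\dag_{ji}$, and for $L\in\zl$ these differ in general. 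Example~\ref{12} already shows $r_{12}=\tfrac{2}{3}\neq\tfrac{4}{3}=r_{21}$, so your parenthetical claim that ``$r_{ij}=r_{ji}$ here'' and that the asymmetry of $R$ ``comes from a different, directed definition'' is mistaken: there is only one definition, and it is the one in Theorem~\ref{15}.

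Consequently your symmetrization $K=\tfrac12(L^\dag+{L^\dag}')$ only controls $\tfrac12(r_{ij}+r_{ji})$. Your proof of (i) yields $r_{ij}+r_{ji}>0$, which does not by itself exclude $r_{ij}\le 0<r_{ji}$; and your proof of (ii) yields a (square-root) triangle inequality for the symmetrized quantities, which does not imply the asymmetric inequality $r_{ik}\le r_{ij}+r_{jk}$. The paper's proof avoids this pitfall by working entrywise: it uses the explicit block description of $L^\dag$ from Lemma~\ref{3.3} together with the facts that $B^{-1}\ge 0$ (from $\z$-matrix theory) and that $B$ and $B'$ are row diagonally dominant, so $B^{-1}$ is diagonally dominant of both its row and its column entries (item (P10)). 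These one-sided dominance statements are exactly what produce the asymmetric inequalities $r_{in}>0$ and $c_{jj}\ge c_{jk}$ that (i) and (ii) require.
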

\begin{proof}
Define $\Omega:=\{1,\dotsc,n-1\}$, $B:=L[\Omega,\Omega]$ and $C:= B^{-1}=[c_{ij}]$. To prove (i), we shall assume without loss of generality that $j=n$ and show that $r_{in} >0$ for any $i \in \Omega$. Put $e:=\1_{n-1}.$
By Lemma $\ref{3.3}$,
\begin{equation}\label{eqn33}
L^{\dag} = \left[
\begin{array}{cccc}
B^{-1} - \frac{1}{\displaystyle n} e e' B^{-1} - \frac{1}{\displaystyle n} B^{-1} ee' & - \frac{1}{\displaystyle n} B^{-1} e \\ \\
-\frac{1}{\displaystyle n} e' B^{-1} & 0 \\ 
\end{array}
\right] + \frac{e' B^{-1}e}{\displaystyle n^2} \1\1'.
\end{equation} 
By a well-known result on $\z$-matrices, $B^{-1}$ is a non-negative matrix. Therefore, $B^{-1} e$ is a positive vector.   
Let $x:= B^{-1} e$ and $y':= e'B^{-1}$. For any $i \in \Omega$,  by $(\ref{eqn33})$ we have
\begin{equation} \label{rin}
\begin{aligned}
r_{in} &= l^{\dag}_{ii} + l^{\dag}_{nn} - 2l^{\dag}_{in} 
\\ &= c_{ii} - \frac{1}{n} y_{i} - \frac{1}{n} x_{i} + \frac{2}{n} x_i 
\\ &= c_{ii} - \frac{1}{n} y_i + \frac{1}{n} x_i.
\end{aligned}
\end{equation} 
It can be seen that $B$ is row diagonally dominant. In view of (P10), $C$ is diagonally dominant of its column entries and therefore, $$c_{ii} \geq c_{ji}~~ \forall j = 1,\dotsc , n-1.$$ Thus, $$nc_{ii} \geq (n-1)c_{ii} \geq \sum_{j=1}^{n-1} c_{ji}=y_i.$$ Hence, $$c_{ii} \geq \frac{y_i}{n}.$$ Since $x_{i} > 0$, it follows from $(\ref{rin})$ that $r_{in} > 0$. This completes the proof of (i). \\
We now prove (ii). We shall show that if $j,k \in \Omega$, then
\[r_{nk} \leq r_{nj}+r_{jk},\]
and the proof can be completed by using a similar argument applied to any other $r_{ik}$. Since
\begin{equation*}\label{eqn34}
\begin{aligned}
r_{nk} - r_{nj}-r_{jk} &= l_{nn}^\dag+l_{kk}^\dag-2l_{nk}^\dag-l_{nn}^\dag-l_{jj}^\dag+2l_{nj}^\dag-l_{jj}^\dag-l_{kk}^\dag+2l_{jk}^\dag \\ &= -2(l_{nk}^\dag+l_{jj}^\dag -l_{nj}^\dag-l_{jk}^\dag),
\end{aligned}
\end{equation*}
it suffices to show that $l_{nk}^\dag+l_{jj}^\dag -l_{nj}^\dag-l_{jk}^\dag \geq 0$. In view of $(\ref{eqn33})$, it follows that
\begin{equation}\label{eqn35}
\begin{aligned}
l_{nk}^\dag+l_{jj}^\dag -l_{nj}^\dag-l_{jk}^\dag &= -\frac{1}{n}y_k+c_{jj}-\frac{1}{n}y_j-\frac{1}{n}x_j+\frac{1}{n}y_j-c_{jk}+\frac{1}{n}y_k+\frac{1}{n}x_j \\ &= c_{jj}-c_{jk}.
\end{aligned}
\end{equation}
Since $B'$ is row diagonally dominant, by (P10), $C$ is diagonally dominant of its row entries, and hence $c_{jj} \geq c_{jk}$. The proof is complete. 
\end{proof}
The main result of this section is now immediate from the above result. 
\begin{thm} \label{nr}
Let $G$ be a strongly connected and balanced directed graph and $R:=[r_{ij}]$ be the resistance matrix of $G$. Then, every off-diagonal entry of
$R$ is positive and thus $R$ is a non-negative matrix. Furthermore, the resistance $r_{ij}$ satisfies the triangle inequality.
\end{thm}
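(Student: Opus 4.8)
The plan is to deduce Theorem \ref{nr} immediately from Theorem \ref{15}; essentially all the analytic work has already been done there, so the only task is to verify that its hypotheses hold for the Laplacian of $G$. That verification is exactly the content of (P7): since $G$ is strongly connected and balanced, $L=\Diag(A\1)-A$ is a $\z$-matrix, $L\1=L'\1=0$, and $\rank(L)=n-1$, which is precisely the condition $L\in\zl$. Moreover, the resistance $r_{ij}=l^\dag_{ii}+l^\dag_{jj}-2l^\dag_{ij}$ of Definition~1 is literally the quantity appearing in Theorem \ref{15}, so both of its conclusions apply verbatim to $R=[r_{ij}]$.

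Concretely, first I would invoke Theorem \ref{15}(i) to obtain $r_{ij}>0$ for every pair of distinct vertices $i,j$. Combined with the trivial identity $r_{ii}=l^\dag_{ii}+l^\dag_{ii}-2l^\dag_{ii}=0$, this shows that every off-diagonal entry of $R$ is strictly positive while every diagonal entry is zero, so $R$ is an entrywise non-negative matrix. Next I would invoke Theorem \ref{15}(ii), which states $r_{ik}\le r_{ij}+r_{jk}$ for all $i,j,k$; this is exactly the triangle inequality for the semi-distance $r$. Together these give both assertions of the theorem.

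There is no real obstacle here, since the substantive argument lives in the proof of Theorem \ref{15}: it rests on the Schur-complement expression for $L^\dag$ in Lemma \ref{3.3}, the classical fact that the inverse of the leading principal submatrix $B=L[\Omega,\Omega]$ of such an $L$ is non-negative, and the row/column diagonal-dominance bounds on $B^{-1}$ drawn from (P10). The present statement is just the graph-theoretic repackaging of that algebraic result, so the only thing to be careful about is confirming that $L\in\zl$ — which, as noted, is recorded in (P7) — and that $r_{ij}$ is the same object in both places.
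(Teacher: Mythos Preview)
Your proposal is correct and matches the paper exactly: the paper states that Theorem \ref{nr} is ``immediate from the above result,'' meaning Theorem \ref{15}, and gives no further proof. Your verification that $L\in\zl$ via (P7) and that the two definitions of $r_{ij}$ coincide is precisely the bridge the paper leaves implicit.
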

\section{Inverse of the resistance matrix}
For a resistance matrix  $R$, we now obtain the inverse formula stated in Theorem \ref{invr}. Since $r_{ij}=l_{ii}^{\dag} + l_{jj}^{\dag} -2 l_{ij}^{\dag}$ and $R = [r_{ij}]$, we have
\begin{equation} \label{req}
R=\diag(L^{\dag}) J + J \diag(L^{\dag}) -2 L^{\dag}. 
\end{equation} Define $X: = ( L + \frac{1}{n}J ) ^{-1}$ and $\tilde{X}:=\diag(X)$. By an easy computation, we find that 
$L^{\dag} = X - \frac{1}{n}J$ and hence
\[ R= \tilde{X}J+J\tilde{X}-2X.  \]
For $i = 1,2,..,n$, let
\begin{equation*}\label{taudef}
{\tau}_i := 2-\sum_{\{j:(i,j) \in E\}}{r_{ji}}~~\mbox{and}~~\tau:=(\tau_1,\dotsc,\tau_n)'.
\end{equation*}
Set $M:= L - L'$.
The inverse formula will be proved by using the following lemma.
\begin{lem}\label{3}
The following are true.
 \begin{enumerate}
\item[{\rm{(i)}}] $\tau = L\tilde{X}\1+\frac{2}{n}\1$.
\item[{\rm (ii)}] $\tau'+\1'\tilde{X}M = \1'\tilde{X}L+\frac{2}{n}\1'$.
\item[{\rm (iii)}] $LR+2I = \tau\1'$.
\item[{\rm (iv)}] $RL +2I = \1\tau'+J\tilde{X}M.$
\item[{\rm (v)}] $\1' \tau=2$.
\item[{\rm (vi)}] \(\tau'R\tau = 2\tilde{x}^{\prime}L\tilde{x}+\frac{8}{n}\tr(L^{\dag}).\)
\item[{\rm (vii)}] $\tau'R \tau >0$. 
\end{enumerate}
\end{lem}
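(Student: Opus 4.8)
The plan is to reduce every part to a handful of elementary identities for the matrix $X:=(L+\frac1n J)^{-1}$, prove the single combinatorial statement (i) by a direct index computation, and then obtain (ii)--(vii) by formal manipulation. First I would record the facts used repeatedly. Since $L\1=L'\1=0$, we have $X^{-1}\1=\1$ and $\1'X^{-1}=\1'$, hence $X\1=\1$ and $\1'X=\1'$; therefore $XJ=JX=J$, and writing $L=X^{-1}-\frac1n J$ gives $LX=XL=I-\frac1n J=P$. Also $L^\dag=X-\frac1n J$ (as in the text), so $l^\dag_{ij}=x_{ij}-\frac1n$, $\tr X=\tr(L^\dag)+1$, and $r_{ij}=x_{ii}+x_{jj}-2x_{ij}$; moreover $R=\tilde{X}J+J\tilde{X}-2X$ as in $(\ref{req})$ because the $\frac1n$-shifts cancel. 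Here $\tilde{X}=\diag(X)$ is diagonal, so $\tilde{X}'=\tilde{X}$, and I write $\tilde{x}:=\tilde{X}\1$ for the vector of diagonal entries of $X$, so that $L\tilde{X}\1=L\tilde{x}$.

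I expect part (i) to be the only real obstacle, since it is the sole place where the balanced hypothesis and the graph-theoretic definition of $\tau$ enter; everything else is bookkeeping. To prove it I would compare the $i$-th entries of $L\tilde{x}+\frac2n\1$ and of $\tau$. Using $L_{ii}=|\{j:(i,j)\in E\}|$ (the out-degree of $i$, equal to its in-degree by balance) and $L_{ij}=-a_{ij}$ for $j\ne i$, one gets $(L\tilde{x})_i=\sum_{j:(i,j)\in E}(x_{ii}-x_{jj})$. Since $r_{ji}=x_{jj}+x_{ii}-2x_{ji}$, the definition gives $\tau_i=2-\sum_{j:(i,j)\in E}(x_{jj}+x_{ii}-2x_{ji})$. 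Subtracting, $\tau_i-(L\tilde{x})_i-\frac2n=2\big(1-\frac1n-L_{ii}x_{ii}+\sum_{j:(i,j)\in E}x_{ji}\big)$; but $L_{ii}x_{ii}-\sum_{j:(i,j)\in E}x_{ji}=\sum_j L_{ij}x_{ji}=(LX)_{ii}=P_{ii}=1-\frac1n$, so the bracket vanishes and (i) follows.

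The remaining parts sit on top of (i). For (v), left-multiply (i) by $\1'$ and use $\1'L=0$ to get $\1'\tau=\frac2n\,\1'\1=2$. For (ii), transpose (i) to get $\tau'=\1'\tilde{X}L'+\frac2n\1'$ and add $\1'\tilde{X}M=\1'\tilde{X}(L-L')$; the $L'$-terms cancel, giving $\tau'+\1'\tilde{X}M=\1'\tilde{X}L+\frac2n\1'$. For (iii), since $LJ=0$ we have $LR=L\tilde{X}J-2LX=(L\tilde{x})\1'-2P=(L\tilde{x}+\frac2n\1)\1'-2I=\tau\1'-2I$ by (i). For (iv), since $JL=0$ we have $RL=J\tilde{X}L-2XL$, so $RL+2I=\1\1'\tilde{X}L+\frac2n J=\1\big(\1'\tilde{X}L+\frac2n\1'\big)$, and substituting the identity from (ii) gives $RL+2I=\1(\tau'+\1'\tilde{X}M)=\1\tau'+J\tilde{X}M$.

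For (vi) I would expand $\tau'R\tau=\tau'\tilde{X}J\tau+\tau'J\tilde{X}\tau-2\tau'X\tau$; using $J\tau=2\1$ and $\tau'J=2\1'$ from (v), each of the first two terms equals $2\tau'\tilde{x}$, so $\tau'R\tau=4\tau'\tilde{x}-2\tau'X\tau$. Now $\tau=L\tilde{x}+\frac2n\1$ together with $XL=P$ gives $X\tau=\tilde{x}+\frac{2-\tr X}{n}\1$, while $\tau'\tilde{x}=\tilde{x}'L\tilde{x}+\frac2n\tr X$ (using $\tilde{x}'L'\tilde{x}=\tilde{x}'L\tilde{x}$ and $\1'\tilde{x}=\tr X$); substituting and using $\tr X=\tr(L^\dag)+1$ collapses everything to $\tau'R\tau=2\tilde{x}'L\tilde{x}+\frac8n\tr(L^\dag)$. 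Finally (vii) is immediate from (vi): $\tilde{x}'L\tilde{x}=\tilde{x}'\tfrac12(L+L')\tilde{x}\ge 0$ since $L+L'$ is positive semidefinite, and $\tr(L^\dag)>0$ by (P8), so $\tau'R\tau\ge\frac8n\tr(L^\dag)>0$.
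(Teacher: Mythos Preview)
Your proof is correct and follows essentially the same approach as the paper's: the same index computation for (i) via the diagonal of $LX=P$, and the same algebraic manipulations for (ii)--(v), (vii). Your treatment of (vi) is a mild variant---you expand $R=\tilde{X}J+J\tilde{X}-2X$ and use $J\tau=2\1$ to reduce to $4\tau'\tilde{x}-2\tau'X\tau$, whereas the paper instead expands both copies of $\tau$ via (i)--(ii) and computes the four cross terms $\1'\tilde{X}L'RL\tilde{X}\1$, $\1'\tilde{X}L'R\1$, $\1'RL\tilde{X}\1$, $\1'R\1$---but both routes are short direct calculations and arrive at the same formula.
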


\begin{proof}
Fix $i \in \{1,\dotsc,n\}$. 
Define 
$\delta_i:=(A\1)_{i}$.
From \mbox{$\big(L + \frac{1}{n} J)X = I$}, we have
\begin{equation} \label{one}
\delta_{i} x_{ii} - \sum_{\{j:(i,j) \in E\}}{x_{ji}} + \frac{1}{n} \sum_{j=1}^{n}{x_{ji}} = 1.
\end{equation}
As $X\1=X' \1=\1$, 
\[ \sum_{j=1}^{n}{x_{ji}} = (X' \1)_{i}=1.\]
Hence from $(\ref{one})$, 
\begin{equation*}
\delta_i x_{ii} - \sum_{\{j:(i,j) \in E\}}{x_{ji}} = 1 - \frac{1}{n}.
\end{equation*}
and so,
\begin{equation} \label{two}
\sum_{\{j:(i,j) \in E\}}{x_{ji}} = \delta_i x_{ii} - 1 + \frac{1}{n}.
\end{equation}
Also, we see that
\begin{equation}\label{three}
\begin{aligned}
{\tau}_i &= 2- \sum_{\{j:(i,j) \in E\}}{r_{ji}} \\
&=2- \sum_{\{j:(i,j) \in E\}}{(x_{ii}+x_{jj}-2x_{ji})} \\
&=2- \sum_{\{j:(i,j) \in E\}}{x_{ii}} - \sum_{\{j:(i,j) \in E\}}{x_{jj}+2 \sum_{\{j:(i,j) \in E\}}{x_{ji}}}.
\end{aligned}
\end{equation}
Since
\begin{equation} \label{cc}
\sum_{\{j:(i,j) \in E\}} x_{ii}=x_{ii} \sum_{j=1}^na_{ij}, 
\end{equation}
 and 
 \begin{equation} \label{ccc}
\sum_{j=1}^na_{ij}=(A \1)_i=\delta_i,
\end{equation}
from $(\ref{three})$,  $(\ref{cc})$ and $(\ref{ccc})$, we now have
\[\tau_i=2- \delta_{i}{x_{ii}} - \sum_{\{j:(i,j) \in E\}}{x_{jj}}+2 \sum_{\{j:(i,j) \in E\}}{x_{ji}} .\]
In view of $(\ref{two})$, 
\begin{equation} \label{tau}
\begin{aligned} 
 \tau_i &=2- \delta_{i}{x_{ii}} - \sum_{\{j:(i,j) \in E\}}{x_{jj}}+ 2\delta_{i}x_{ii} -2 + \frac{2}{n}  \\
 &=\delta_i{x_{ii}} -\sum_{\{j:(i,j) \in E\}}{x_{jj}}+ \frac{2}{n} .\\
 \end{aligned}
 \end{equation}
Since
\begin{equation}\label{eqn8}
\begin{aligned}
\sum_{\{j:(i,j) \in E\}} x_{jj}&=\sum_{j=1}^n a_{ij} x_{jj} \\
&=(A\tilde{X} \1)_i \\
\end{aligned}
\end{equation}and 
\begin{equation}\label{eqn9}
(\Diag(A\1) \tilde{X} \1)_{i}=\delta_{i}x_{ii},
\end{equation} equations  $(\ref{tau})$, $(\ref{eqn8})$ and $(\ref{eqn9})$ imply
\begin{equation*} \label{taufinal}
\begin{aligned}
\tau_i &= ((\Diag(A\1) -A)\tilde{X} \1)_i + \frac{2}{n} \\
&=(L \tilde{X} \1 + \frac{2}{n} \1)_{i}.
\end{aligned}
\end{equation*}
Thus,
\[\tau=L \tilde{X} \1 + \frac{2}{n} \1. \]
The proof of (i) is complete. \\
We have 
\begin{equation*}
\begin{aligned}
\1' \tilde{X} M + \tau'&=\1' \tilde{X}L-\1'\tilde{X}L'+\1^{\prime} \tilde{X} L'+\frac{2}{n} \1' \\
&=\1' \tilde{X}L +\frac{2}{n} \1'.
\end{aligned}
\end{equation*}
The proof of (ii) is complete. \\
To prove (iii), recall that
$$R=\tilde{X} J + J\tilde{X} -2X.$$ As $X=L^{\dag}+\frac{1}{n}J$, we have
\begin{equation} \label{R}
R=\tilde{X}J+J\tilde{X}-2L^{\dag}-\frac{2}{n}J
\end{equation}
In view of (P8), $LL^{\dag}=I-\frac{1}{n}J$. Since $LJ=0$, by $(\ref{R})$, 
\begin{equation}\label{eqn10}
\begin{aligned}
LR&= L \tilde{X} J -2I +\frac{2}{n} J \\
&=L \tilde{X} \1 \1' -2 I + \frac{2}{n} \1 \1' \\
&=(L \tilde{X} \1+\frac{2}{n} \1)\1' -2 I.
\end{aligned}
\end{equation}
By (i), $$\tau=L \tilde{X} \1 + \frac{2}{n} \1.$$
Hence by $(\ref{eqn10})$, $LR= \tau \1'-2I$. This completes the proof of (iii). \\
To prove (iv), first we observe that
\begin{equation} \label{RL}
\begin{aligned}
RL + 2I&= (\tilde{X} J + J \tilde{X} -2X) L+ 2I \\
&=J \tilde{X}L - 2XL+2I .\\ 
\end{aligned}
\end{equation}
Using $X(L+\frac{1}{n}J) = I$ and $XJ=J$, we have $XL = I-\frac{1}{n}J$. Hence by $(\ref{RL})$,
\begin{equation}\label{eqn11}
\begin{aligned}
RL + 2I&=J \tilde{X}L - 2(I-\frac{1}{n}J)+2I \\
&=J \tilde{X}L+\frac{2}{n}J.
\end{aligned}
\end{equation}
By (i),
\begin{equation}\label{eqn12}
\begin{aligned} 
\1\tau' &=\1 (\1' \tilde{X} L' + \frac{2}{n} \1') \\
&= J \tilde{X}L'  + \frac{2}{n} J.
\end{aligned}
\end{equation}
From $(\ref{eqn11})$ and $(\ref{eqn12})$,  we get  $$RL+2I-J \tilde{X}L =\1 \tau'- J \tilde{X} L',$$and hence $$RL+2I=\1 \tau'+ J \tilde{X} M.$$
The proof of (iv) is done. 
Using part (i),
\[\1'\tau = \1'L\tilde{X}\1+\frac{2}{n}\1'\1 = 2.\] This proves (v). \\
 By using (i), (ii) and $M=L-L'$, we have 
\begin{equation}\label{eqn16}
\begin{aligned}
\tau'R\tau &=  \Big( \1'\tilde{X}L+\frac{2}{n}\1'-\1'\tilde{X}M \Big) R \Big( L\tilde{X}\1+\frac{2}{n}\1 \Big) \\ 
&= \1'\tilde{X}L'RL\tilde{X}\1 + \frac{2}{n} \1'\tilde{X}L'R\1 + \frac{2}{n} \1'RL\tilde{X}\1 + \frac{4}{n^2} \1'R\1. 
\end{aligned}
\end{equation}
As $L \1=0$, $L'\1 = 0$ and $R=\tilde{X} J + J\tilde{X} -2X$,
\begin{equation}\label{eqn17}
\begin{aligned}
\1'\tilde{X}L'RL\tilde{X}\1 &= \1'\tilde{X}L'(\tilde{X}J+J\tilde{X}-2X)L\tilde{X}\1 \\ &= -2\1'\tilde{X}L'XL\tilde{X}\1.
\end{aligned}
\end{equation}
As $XL=I - \frac{1}{n}J$, by $(\ref{eqn17})$,
\begin{equation}\label{eqn18}
\begin{aligned}
\1'\tilde{X}L'RL\tilde{X}\1 =  -2\1'\tilde{X}L'\Big(I-\frac{1}{n}J\Big)\tilde{X}\1 &= -2\1'\tilde{X}L'\tilde{X}\1 \\ &= -2\tilde{x}'L\tilde{x}.
\end{aligned}
\end{equation} 
By $(\ref{eqn11})$,
\begin{equation}\label{eqn19}
\begin{aligned}
\1'RL\tilde{X}\1 &= \1'\Big(J \tilde{X}L+\frac{2}{n}J-2I\Big)\tilde{X}\1 &= (n \1'\tilde{X}L)\tilde{X}\1 \\ &= n\tilde{x}'L\tilde{x}. 
\end{aligned}
\end{equation}
Since $L'\1=0$ and $X\1 = \1$,
\begin{equation} \label{eqn20}
\begin{aligned}
\1'\tilde{X}L'R\1 &= \1'\tilde{X}L'(\tilde{X}J+J\tilde{X}-2X)\1 \\ &= n\1'\tilde{X}L'\tilde{X}\1+2\1'\tilde{X}L'\1 \\ &= n\tilde{x}'L\tilde{x}.
\end{aligned}
\end{equation}
From $R=\tilde{X} J + J\tilde{X} -2X$ and $X \1=1$, we have
\begin{equation}\label{eqn21}
\1'R\1 = 2n~ \tr(X)-2n = 2n~\tr(L^\dag).
\end{equation}
Substituting $(\ref{eqn18})$, $(\ref{eqn19})$, $(\ref{eqn20})$ and $(\ref{eqn21})$ in $(\ref{eqn16})$, we get
\begin{equation*}
\begin{aligned}
\tau'R\tau =2 \tilde{x}' L \tilde{x} + \frac{8}{n} \tr(L^{\dagger}). 
\end{aligned}
\end{equation*}
Since $L+L'$ is positive semidefinite, $\tilde{x}' L \tilde{x} \geq 0$. As trace of $L^\dag$ is also positive, we get (vii). The proof is complete.
\end{proof}

\begin{thm}\label{11}
\[ R^{-1} = - \frac{1}{2} L+ \frac{1}{\tau' R \tau} (\tau (\tau'+1'\diag(L^\dag)M)),\]
where $M = L-L^T$. 
\end{thm}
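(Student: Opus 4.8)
The plan is to verify the claimed identity by multiplying the right-hand side by $R$ (or by multiplying on the other side), using the relations collected in Lemma~\ref{3} to reduce everything to the identity matrix. Write $W := -\frac{1}{2}L + \frac{1}{\tau'R\tau}\,\tau(\tau' + \1'\tilde{X}M)$; note first that by Lemma~\ref{3}(ii) the vector $\tau' + \1'\tilde{X}M$ equals $\1'\tilde{X}L + \frac{2}{n}\1'$, so $W = -\frac12 L + \frac{1}{\tau'R\tau}\,\tau(\1'\tilde{X}L + \frac{2}{n}\1')$. I would compute $WR$. For the first term, Lemma~\ref{3}(iii) gives $LR = \tau\1' - 2I$, so $-\frac12 LR = I - \frac12\tau\1'$. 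For the second term I need $(\1'\tilde{X}L + \frac{2}{n}\1')R$; since this row vector is exactly $\tau'$ rewritten via part (ii) transposed — more directly, $L'\1 = 0$ and $L\tilde X\1 + \frac2n\1 = \tau$ (part (i)), so $(\1'\tilde X L + \frac2n\1')R = ((L\tilde X\1 + \frac2n\1)'R)$... here I must be careful because $L\tilde X\1$ and $\1'\tilde X L$ are not transposes of each other. Instead I would use $RL + 2I = \1\tau' + J\tilde X M$ from part (iv): transposing is awkward since $R$ is not symmetric, so the cleaner route is to compute $(\1'\tilde X L)R$ by writing $\1'\tilde X L \cdot R = \1'\tilde X (LR) = \1'\tilde X(\tau\1' - 2I) = (\1'\tilde X\tau)\1' - 2\1'\tilde X$, and $\frac{2}{n}\1' R$ using $\1'R = \1'(\tilde X J + J\tilde X - 2X) = n\tilde x' + (\1'\tilde X\1)\1' - 2\1' = n\tilde x' + (\tr L^\dag + 1)\1' \cdot(\text{something})$ — these scalar bookkeeping steps are routine.

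Assembling, $WR = I - \frac12\tau\1' + \frac{1}{\tau'R\tau}\,\tau\big[(\1'\tilde X\tau)\1' - 2\1'\tilde X + \frac2n(\1'R)\big]$. The bracketed row vector, once simplified, should turn out to be a scalar multiple of $\1'$, and that scalar must be $\frac12\tau'R\tau$ so that the two non-identity terms cancel. To see this I would contract the bracket against $\tau$ on the right: the coefficient of $\tau\1'$ is determined by $\1'\tau = 2$ (part (v)), and the whole correction term acting on the right must reproduce $\frac12\tau(\1'\tau) = \tau$ worth of cancellation. Concretely, it suffices to check (a) that $WR$ applied to $\1$ gives $\1$ — i.e. $WR\1 = \1$ — which is automatic since $L\1 = 0$, $R\1 = R\1$, and one verifies $\tau'R\1$-type scalars match; and (b) that $WR$ restricted to $\{\1\}^\perp$ equals the identity, using that $R$ is invertible (its inverse exists by Theorem~\ref{invr}/the determinant formula, or one argues invertibility directly from $LR = \tau\1' - 2I$ plus $\tau \neq 0$). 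Actually the slick finish: show $WR = I$ by checking $WR\1 = \1$ and $WR$ agrees with $I$ on a complement, or equivalently show $(WR - I)$ annihilates a spanning set.

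The main obstacle, I expect, is the asymmetry: because $R \neq R'$ and $L \neq L'$, one cannot freely transpose, so the roles of $LR$ (part iii) and $RL$ (part iv) are genuinely different, and the term $\frac{1}{\tau'R\tau}\tau(\tau' + \1'\tilde X M)$ is engineered precisely so that $R$ times it (on the appropriate side) lands correctly — the $\1'\tilde X M$ correction compensates for $RL$ carrying the extra $J\tilde X M$ term in part~(iv). So I would in fact verify the identity by computing $RW$ rather than $WR$ if that is where the parts line up more naturally: $RW = -\frac12 RL + \frac{1}{\tau'R\tau}(R\tau)(\tau' + \1'\tilde X M)$, and $-\frac12 RL = I - \frac12(\1\tau' + J\tilde X M)$ by part~(iv), so I need $(R\tau)(\tau' + \1'\tilde X M) = \frac{\tau'R\tau}{2}(\1\tau' + J\tilde X M)$, i.e. $R\tau$ must be proportional to $\1$ up to the $J\tilde X M$ piece. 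Checking $R\tau = \frac{\tau'R\tau}{2}\,\1 + (\text{term matching } J\tilde X M)$ is the crux; here one uses part~(vi)'s evaluation of $\tau'R\tau$ together with $L\tilde X\1$, $L'\1 = 0$, and the explicit form $R = \tilde X J + J\tilde X - 2X$. Once $R\tau$ is pinned down, the identity falls out; verifying $\tau'R\tau \neq 0$ so that division is legitimate is exactly Lemma~\ref{3}(vii). I do not anticipate any step beyond careful linear-algebra bookkeeping with these seven relations.
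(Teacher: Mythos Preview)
Your overall strategy---multiply the proposed inverse by $R$ and reduce using Lemma~\ref{3}---is exactly the paper's, and your identification of the crux is correct: everything comes down to showing that $(\tau'+\1'\tilde X M)R$ is a scalar multiple of $\1'$ (equivalently, that $R\tau$ is a scalar multiple of $\1$), with the scalar forced to be $\tfrac12\tau'R\tau$ by $\1'\tau=2$.

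Where you diverge from the paper is in \emph{how} you propose to establish that fact. You head toward a direct expansion of $(\1'\tilde X L+\tfrac2n\1')R$ and of $R\tau$ using $R=\tilde X J+J\tilde X-2X$. That computation does go through, but the paper's route is much shorter and avoids all of the scalar bookkeeping you flag as ``routine'': from Lemma~\ref{3}(iii) and (v) one gets $LR\tau=(\tau\1'-2I)\tau=(\1'\tau)\tau-2\tau=0$, and since $\ker L=\span\{\1\}$ this gives $R\tau=\alpha\1$ immediately; contracting with $\tau'$ fixes $\alpha=\tfrac12\tau'R\tau$. Symmetrically, from Lemma~\ref{3}(iv) one multiplies $(\tau'+\1'\tilde X M)$ into $RL+2I=\1\tau'+J\tilde X M$, uses $\1'\tau=2$ and $M\1=0$ to get $(\tau'+\1'\tilde X M)RL=0$, and since $\ker L'=\span\{\1\}$ concludes $(\tau'+\1'\tilde X M)R=\beta\1'$; the case $\beta=0$ is ruled out because $R\tau\in\span\{\1\}$ forces $\1'\tilde X M R\tau=0$, whence $\beta=0$ would give $\tau'R\tau=0$, contradicting (vii). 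With these two scalar identities in hand, $WR=I$ is a two-line check via (iii).

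One small correction: your remark that ``$R\tau$ must be proportional to $\1$ up to the $J\tilde X M$ piece'' is not quite right. In fact $R\tau$ is \emph{exactly} a multiple of $\1$, and then $(R\tau)(\tau'+\1'\tilde X M)=c\,\1(\tau'+\1'\tilde X M)=c(\1\tau'+J\tilde X M)$ automatically produces the $J\tilde X M$ term you need to cancel part~(iv); no extra correction is required. Also, you need not appeal to invertibility of $R$ in advance (your reference to Theorem~\ref{invr} would be circular): exhibiting $W$ with $WR=I$ is itself the proof that $R$ is invertible.
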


\begin{proof}
By item (iii) of Lemma \ref{3},
\[LR + 2 I= \tau \1'. \]
In view of item (v) of the previous Lemma, $\1' \tau=2$. So,
\[LR \tau + 2\tau = (\1' \tau)\tau=2 \tau. \]
This implies $LR \tau=0$ and since $L \in \zl$, there exists $0 \neq \alpha \in \rr$ such that
$R \tau =\alpha \1$. As $\tau' \1=2$,   we get $\alpha = \frac{1}{2}\tau' R \tau$. Therefore, 
\begin{equation} \label{trt}
R \tau= \frac{\tau' R \tau}{2} \1.
\end{equation}
Since $M \1=0$, from item (iv) of Lemma $\ref{3}$, we deduce that
\begin{equation*}
\begin{aligned}
(\tau' + \1'\tilde{X}M)(RL+2I) &= (\tau' + \1'\tilde{X}M)(\1\tau'+J\tilde{X}M)  \\
&=2(\tau' + \1'\tilde{X}M).
\end{aligned}
\end{equation*}
After simplification the above equation leads to
\begin{equation*} \label{taue}
(\tau' + \1' \tilde{X} M)RL=0. 
\end{equation*}
We now claim that $(\tau' + \1' \tilde{X} M)R \neq 0$.
If not, then $\tau' R \tau + \1' \tilde{X} M R \tau=0$. By $(\ref{trt})$, $R \tau$ is a multiple of $\1$. So, $M R \tau=0$ and hence $\tau'R \tau=0$. This contradicts the previous Lemma. Hence, $(\tau' + \1' \tilde{X} M) R \neq 0$. 
As $L \in \zl$, it follows that $$(\tau' + \1' \tilde{X} M)R=\beta \1',$$for some $\beta \neq 0$. Since $\1' \tau=2$, $\beta= \frac{1}{2}\tau' R \tau$.  Thus,
\begin{equation} \label{q}
(\tau' + \1'\tilde{X}M)R = \frac{\tau' R \tau}{2} \1'.
\end{equation}
Now,  item (iii) of Lemma \ref{3} and (\ref{q}) imply 
\begin{equation*}
\begin{aligned}
\Big(-\frac{1}{2} L+ \frac{\tau ({\tau}'+1'\tilde{X}M)}{{\tau}'R{\tau}}\Big)R &= -\frac{1}{2} LR+ \frac{1}{{\tau}'R{\tau}} \tau ({\tau}'+1'\tilde{X}M)R  \\
&= I - \frac{1}{2} \tau \1'+  \frac{1}{{\tau}'R{\tau}} \Big( \frac{\tau' R \tau}{2} \Big) \tau \1' \\
&=I.
\end{aligned}
\end{equation*}
Since $L^\dag = X - \frac{1}{n}J$, $\1'\tilde{X}M =\1'(\diag(L^\dag)+\frac{1}{n}I)M= \1'\diag(L^\dag)M$. The proof is complete.
\end{proof}
To illustrate the inverse formula in Theorem $\ref{11}$, we consider the resistance matrix of Example $\ref{12}$.
\begin{exmm}\label{17} \rm
Consider the resistance matrix in Example \ref{12}.
\begin{equation}
R = 
\left[
{\begin{array}{rrrrrr}
0 & \frac{2}{3} & \frac{5}{3} & \frac{17}{12} & \frac{13}{12} & \frac{5}{3} \\
\frac{4}{3} & 0 & 1 & \frac{3}{4} & \frac{5}{12} & 1 \\
\frac{7}{3} & 1 & 0 & \frac{7}{4} & \frac{17}{12} & 2 \\
\frac{19}{12} & \frac{5}{4} & \frac{9}{4} & 0 & \frac{2}{3} & \frac{5}{4} \\
\frac{11}{12} & \frac{7}{12} & \frac{19}{12} & \frac{4}{3} & 0 & \frac{7}{12} \\
\frac{1}{3} & 1 & 2 & \frac{7}{4} & \frac{17}{12} & 0
\end{array}}
\right].
\end{equation}
Then we have the following:
\[
\tau = 
\left[
{\begin{array}{rrrrrr}
\frac{2}{3} & -\frac{5}{6} & 1 & \frac{2}{3} & \frac{1}{6} & \frac{1}{3}
\end{array}}
\right]',
\]
\[
{\tau}'+1'\diag{(L^\dag)}M 
= \left[
{\begin{array}{rrrrrr}
\frac{1}{3} & -\frac{3}{4} & 1 & \frac{3}{4} & \frac{1}{12} & \frac{7}{12}
\end{array}}
\right] ,\]
and
\begin{equation}\label{eqn7}
\tau'R\tau = \frac{67}{12}.
\end{equation}
We now have
\begin{small}
\begin{equation*}
\begin{aligned}
R^{-1} &= - \frac{1}{2} L+ \frac{1}{{\tau}'R{\tau}}(\tau ({\tau}'+1'\diag(L^\dag)M)) \\ 
&=\left[
{\begin{array}{rrrrrr}
-\frac{185}{402} & \frac{55}{134} & \frac{8}{67} & \frac{6}{67} & \frac{2}{201} & \frac{14}{201} \\
-\frac{10}{201} & -\frac{93}{67} & \frac{47}{134} & \frac{26}{67} & \frac{98}{201} & -\frac{35}{402} \\
\frac{4}{67} & \frac{49}{134} & -\frac{43}{134} & \frac{9}{67} & \frac{1}{67} & \frac{7}{67} \\
\frac{8}{201} & -\frac{6}{67} & \frac{8}{67} & -\frac{55}{134} & \frac{205}{402} & \frac{14}{201} \\
\frac{2}{201} & \frac{32}{67} & \frac{2}{67} & \frac{3}{134} & -\frac{401}{402} & \frac{104}{201} \\
\frac{209}{402} & -\frac{3}{67} & \frac{4}{67} & \frac{3}{67} & \frac{1}{201} & -\frac{187}{402}
\end{array}}
\right].
\end{aligned}
\end{equation*}
\end{small}
\end{exmm}

\subsection{Determinant of the resistance matrix}
By using Theorem $\ref{11}$, we compute an expression for the determinant of the resistance matrix.

\begin{corollary}\label{14}
\[ \det(R) = (-1)^{n-1} 2^{n-3} \frac{{\tau}'R \tau}{\kappa(G)}.\]
\end{corollary}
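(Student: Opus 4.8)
The plan is to apply the rank-one update formula for determinants, namely (P9): $\det(A + uv') = \det(A) + v'\adj(A)u$. Theorem \ref{11} writes $R^{-1}$ as a sum of $-\tfrac12 L$ and a rank-one term $\tfrac{1}{\tau'R\tau}\tau(\tau' + \1'\diag(L^\dag)M)$. So first I would set $A := -\tfrac12 L$, $u := \tfrac{1}{\tau'R\tau}\tau$, and $v := (\tau' + \1'\diag(L^\dag)M)' = \tau + M'\diag(L^\dag)\1$, so that $R^{-1} = A + uv'$. Then (P9) gives
\[
\det(R^{-1}) = \det\!\Big(-\tfrac12 L\Big) + v'\adj\!\Big(-\tfrac12 L\Big)u.
\]
Since $L$ is singular ($L\1 = 0$, $\rank L = n-1$), the first term vanishes: $\det(-\tfrac12 L) = (-\tfrac12)^n \det(L) = 0$. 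So $\det(R^{-1}) = v'\adj(-\tfrac12 L)u$.

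Next I would evaluate the adjugate term. We have $\adj(-\tfrac12 L) = (-\tfrac12)^{n-1}\adj(L)$. By (P8) (applied to $L$, which lies in $\zl$), all cofactors of $L$ are equal; combined with the matrix-tree theorem (P11), each cofactor equals $\kappa(G)$. Hence $\adj(L) = \kappa(G)\, J$ where $J = \1\1'$. (One should double-check the sign/transpose convention: $\adj(L)_{ij}$ is the $(j,i)$ cofactor, but since all cofactors of $L$ are equal this is simply $\kappa(G)$ for every entry, so $\adj(L) = \kappa(G) J$ cleanly.) Therefore
\[
\det(R^{-1}) = (-\tfrac12)^{n-1} \kappa(G)\, v' J u = (-\tfrac12)^{n-1} \kappa(G)\, (\1'v)(\1'u).
\]
Now I compute the two scalars: $\1'u = \tfrac{1}{\tau'R\tau}\1'\tau = \tfrac{2}{\tau'R\tau}$ by Lemma \ref{3}(v); and $\1'v = \1'\tau + \1'M'\diag(L^\dag)\1 = 2 + \1'M'\diag(L^\dag)\1$. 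Since $M = L - L'$, we get $M' = L' - L = -M$, and $\1'M' = -\1'M = -(\1'L - \1'L') = 0$ because $\1'L = \1'L' = 0$ (the column sums of $L$ vanish). Hence $\1'v = 2$. Putting this together, $\det(R^{-1}) = (-\tfrac12)^{n-1}\kappa(G)\cdot 2 \cdot \tfrac{2}{\tau'R\tau} = (-1)^{n-1} 2^{2-n-1}\cdot 2^2 \cdot \tfrac{\kappa(G)}{\tau'R\tau} = (-1)^{n-1} 2^{3-n}\tfrac{\kappa(G)}{\tau'R\tau}$.

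Finally, taking reciprocals, $\det(R) = 1/\det(R^{-1}) = (-1)^{n-1} 2^{n-3}\,\tfrac{\tau'R\tau}{\kappa(G)}$, which is the claimed formula. (Here I use that $(-1)^{n-1}$ is its own reciprocal.) I should note at the outset that $R$ is indeed invertible: this is implicit in Theorem \ref{11}, which exhibits an explicit inverse, and $\tau'R\tau > 0$ by Lemma \ref{3}(vii), so the formula is well-posed. The main obstacle — really the only delicate point — is bookkeeping the constants: the power of $-\tfrac12$ in the adjugate of an $n\times n$ matrix, the transpose convention in $\adj$, and verifying $\1'v = 2$ via the vanishing column sums of $L$; none of these is deep, but a sign error would propagate to the final exponent. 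Everything else is a direct substitution into (P9) using facts already established in the excerpt.
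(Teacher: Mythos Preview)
Your proof is correct and follows essentially the same route as the paper: apply (P9) to the rank-one decomposition in Theorem~\ref{11}, use $\adj(-\tfrac12 L)=(-\tfrac12)^{n-1}\kappa(G)J$, reduce the bilinear form via $\1'\tau=2$ and $M\1=M'\1=0$, and invert. The only cosmetic difference is that the paper kills the cross term by observing $M\1=0$ (so $\1'\diag(L^\dag)MJ\tau=0$), whereas you kill it on the other side via $\1'M'=0$; both yield $v'Ju=(\1'\tau)^2/(\tau'R\tau)=4/(\tau'R\tau)$.
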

\begin{proof}
By using Theorem $\ref{11}$ and (P9), we have
\begin{equation*}
\begin{aligned}
\det(R^{-1})  &= \frac{1}{\tau'R\tau}(\tau'+\1'\diag(L^\dag)M)\adj(-\frac{1}{2}L)\tau \\ &= \bigg(-\frac{1}{2}\bigg)^{n-1} \frac{\kappa(G)}{\tau'R\tau} (\tau'+\1'\diag(L^\dag)M)J\tau \\ &= \bigg(-\frac{1}{2}\bigg)^{n-1}\frac{\kappa(G)}{\tau'R\tau} \tau'J\tau.
\end{aligned}
\end{equation*}
Since $\1'\tau = 2$, it follows that
\[ \det(R) = (-1)^{n-1} 2^{n-3} \frac{{\tau}'R \tau}{\kappa(G)}.\]
\end{proof}

\begin{exmm} \rm
Consider the directed graph $G$ on six vertices given in Figure \ref{digrapheg}(a). $G$ has two oriented spanning trees $T_1$ and $T_2$ (see Figure \ref{deteg}(a) and \ref{deteg}(b), respectively) rooted at vertex $1$.
\begin{figure}[!h]
~~~~~~~\subfloat[]{\begin{tikzpicture}[shorten >=1pt, auto, node distance=3cm, ultra thick,
   node_style/.style={circle,draw=black,fill=white !20!,font=\sffamily\Large\bfseries},
   edge_style/.style={draw=black, ultra thick}]
\node[vertex] (1) at  (0,0) {$1$};
\node[vertex] (2) at  (2,0) {$2$};
\node[vertex] (3) at  (3.5,-1) {$3$};
\node[vertex] (4) at  (2,-2) {$4$};
\node[vertex] (5) at  (0,-2) {$5$};
\node[vertex] (6) at  (-1.5,-1) {$6$};
%\draw[edge]  (1) to (2);
%\draw[edge]  (2.3,-0.29) to (3.15,-0.88);
\draw[edge]  (3.2,-0.7) to (2.39,-0.13);
%\draw[edge]  (0.3,-1.6) to (1.65,-0.2);
\draw[edge]  (2) to (4);
%\draw[edge]  (1.8,-0.35) to (0.385,-1.8);
\draw[edge]  (4) to (5);
\draw[edge]  (5) to (6);
\draw[edge]  (6) to (1);
\end{tikzpicture}}
~~~~~~~~~~~~~~ 
\subfloat[]{\begin{tikzpicture}[shorten >=1pt, auto, node distance=3cm, ultra thick,
   node_style/.style={circle,draw=black,fill=white !20!,font=\sffamily\Large\bfseries},
   edge_style/.style={draw=black, ultra thick}]
\node[vertex] (1) at  (0,0) {$1$};
\node[vertex] (2) at  (2,0) {$2$};
\node[vertex] (3) at  (3.5,-1) {$3$};
\node[vertex] (4) at  (2,-2) {$4$};
\node[vertex] (5) at  (0,-2) {$5$};
\node[vertex] (6) at  (-1.5,-1) {$6$};
%\draw[edge]  (1) to (2);
%\draw[edge]  (2) to (3);
%\draw[edge]  (2.3,-0.29) to (3.15,-0.88);
\draw[edge]  (3.2,-0.7) to (2.39,-0.13);
%\draw[edge]  (0.3,-1.6) to (1.65,-0.2);
%\draw[edge]  (2) to (4);
\draw[edge]  (1.8,-0.35) to (0.385,-1.8);
\draw[edge]  (4) to (5);
\draw[edge]  (5) to (6);
\draw[edge]  (6) to (1);
\end{tikzpicture}}
\caption{(a) spanning tree $T_1$ (b) spanning tree $T_2$} \label{deteg}
\end{figure}
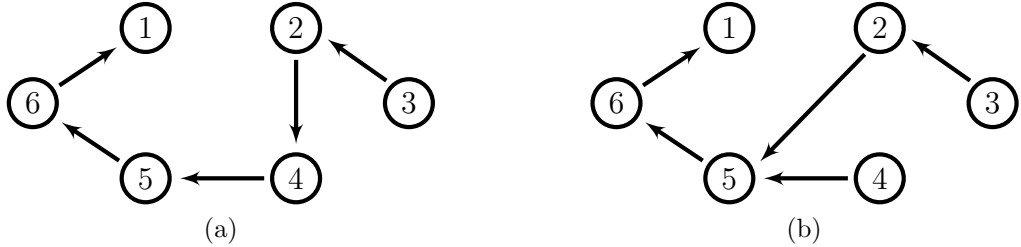
Thus, $\kappa(G) = 2$. From Example \ref{17}, $\tau'R\tau = \frac{67}{12}$. By Corollary \ref{14}, we have
\begin{equation}
\begin{aligned}
\det(R) &= (-1)^{n-1} 2^{n-3} \frac{{\tau}'R \tau}{\kappa(G)} \\ &= -\frac{67}{3}.
\end{aligned}
\end{equation}
\end{exmm}

\section{Cofactor sums of the resistance matrix}
Let $\Omega_1,\Omega_2 \subset \{1,2,\ldots,n\}$ be non-empty and $ |\Omega_1| = |\Omega_2|$. Define $\eta:= |\Omega_1| = |\Omega_2|$. We now derive an identity for computing the sum of all the entries in the cofactor matrix of $R[\Omega_1,\Omega_2]$.
We shall use the following elementary lemma repeatedly. The proof is immediate. 
\begin{lem}\label{1.4} 
Let B be an $m \times m$ matrix, $\beta \in \rr$ and 
 \[
   A=
  \left[ {\begin{array}{cc}
   B & \frac{1}{\displaystyle \beta}\1_m \\
   \frac{1}{\displaystyle \beta} \1'_m & 0 \\
  \end{array} } \right]
.\] Then, 
\[ \csum(B) = -{\beta}^2\det(A).\]
\end{lem}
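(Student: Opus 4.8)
The plan is to establish the identity by a single cofactor expansion of $\det(A)$ along its last row, reducing everything to the definition $\csum(B) = \sum_{i,j} C_{ij}(B)$, where $C_{ij}(B) = (-1)^{i+j} M_{ij}(B)$ is the $(i,j)$ cofactor of $B$ and $M_{ij}(B)$ the corresponding minor. I will use two elementary observations. First, $\csum(B) = \1_m' \adj(B) \1_m$, since the sum of all cofactors of $B$ equals the sum of all entries of its classical adjoint (because $\adj(B)_{ij} = C_{ji}(B)$). Second, for each fixed $j$, the determinant of the matrix obtained from $B$ by replacing its $j$-th column with $\1_m$ equals $\sum_{i=1}^m C_{ij}(B)$, by expanding that determinant along the $j$-th column.

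Next I would expand $\det(A)$ along the last row of $A$, namely $(\tfrac{1}{\beta}, \dots, \tfrac{1}{\beta}, 0)$. The final entry is $0$, so only the first $m$ terms survive. For $j \in \{1, \dots, m\}$, the corresponding term equals $\tfrac{1}{\beta} \cdot (-1)^{(m+1)+j}$ times the minor of $A$ obtained by deleting row $m+1$ and column $j$; this minor is the determinant of the $m \times m$ matrix whose columns are the columns of $B$ with the $j$-th deleted and with $\tfrac{1}{\beta}\1_m$ appended as the last column. Permuting that appended column back into position $j$ introduces a sign $(-1)^{m-j}$, and factoring out the scalar $\tfrac{1}{\beta}$ leaves exactly the determinant of $B$ with its $j$-th column replaced by $\1_m$, which by the second observation is $\sum_{i=1}^m C_{ij}(B)$.

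Finally I would collect the constants. The total sign attached to the $j$-th term is $(-1)^{(m+1)+j}(-1)^{m-j} = (-1)^{2m+1} = -1$, independent of $j$, and each term carries an overall factor $\tfrac{1}{\beta^2}$; hence $\det(A) = -\tfrac{1}{\beta^2}\sum_{j=1}^m\sum_{i=1}^m C_{ij}(B) = -\tfrac{1}{\beta^2}\csum(B)$, which rearranges to $\csum(B) = -\beta^2\det(A)$. I do not anticipate any genuine obstacle; the only points requiring care are the sign bookkeeping when the appended column is permuted back into its slot, and the tacit hypothesis $\beta \neq 0$ needed for $A$ to be defined. As an alternative one could note that both sides are polynomials in the entries of $B$ and that, for invertible $B$, the Schur-complement formula gives $\det(A) = -v'\adj(B)u$ with $u = v = \tfrac{1}{\beta}\1_m$, so the identity holds on a dense set and hence everywhere; I would still prefer the direct cofactor expansion, since it needs no limiting argument.
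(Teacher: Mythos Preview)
Your argument is correct: the cofactor expansion along the last row of $A$, together with the sign bookkeeping $(-1)^{(m+1)+j}(-1)^{m-j}=-1$ and the identification of $\det[B_1,\dots,B_{j-1},\1_m,B_{j+1},\dots,B_m]$ with $\sum_i C_{ij}(B)$, gives $\det(A)=-\tfrac{1}{\beta^2}\csum(B)$ cleanly. The paper does not supply a proof at all (it simply calls the lemma ``elementary'' and says ``the proof is immediate''), so there is nothing to compare your method against; your direct expansion is exactly the kind of one-line verification the authors presumably had in mind, and your Schur-complement alternative (which is essentially $\det(A)=\det(B)\cdot\big(0-\tfrac{1}{\beta^2}\1_m'B^{-1}\1_m\big)=-\tfrac{1}{\beta^2}\1_m'\adj(B)\1_m$ for invertible $B$, extended by density) is equally natural and perhaps closer in spirit to the paper's later use of (P9).
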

We now obtain the following identity.
\begin{lem}\label{1.5}
Let $S$ be a $n \times n$ matrix. Suppose rank$(S)= n-1$, $S \1=0$ and $S' \1=0$. Then,
\[\csum(S[\Omega_1,\Omega_2]) = (-1)^{\alpha(\Omega_1)+\alpha(\Omega_2)} n^2 \mathit{\gamma} \det(S^ \dag [\Omega_2^c,\Omega_1^c]).\]
where $\gamma$ is the common cofactor value of $S$.
\end{lem}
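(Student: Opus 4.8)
The plan is to combine the Jacobi identity (P6) with the auxiliary bordering Lemma \ref{1.4}, using the fact that $S$ behaves almost like a nonsingular matrix once we pass to $S + \frac{1}{n}J$. First I would introduce $\widehat{S} := S + \frac{1}{n}J$. Since $S\1 = S'\1 = 0$ and $\rank(S) = n-1$, the matrix $\widehat{S}$ is nonsingular and $\widehat{S}^{-1} = S^\dag + \frac{1}{n}J$ (this is the identity $L^\dag = X - \frac1n J$ used in Section 4, applied to $S$ in place of $L$). The key point is to relate cofactor sums of submatrices of $S$ to determinants of submatrices of $\widehat{S}$, and then to push determinants across the inverse via Jacobi.

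The main steps, in order, are as follows. First, apply Lemma \ref{1.4} with $B = S[\Omega_1,\Omega_2]$ and an appropriate $\beta$: bordering $S[\Omega_1,\Omega_2]$ with rows/columns of $\frac{1}{\beta}\1$ produces a matrix whose determinant, after choosing $\beta$ correctly (I expect $\beta = n$ up to sign, reflecting the $\frac1n J$ correction), coincides with $\det(\widehat{S}[\Omega_1 \cup \{\text{border}\}, \Omega_2 \cup \{\text{border}\}])$ — more precisely, the bordered matrix is exactly a submatrix of $\widehat{S}$ after the rank-one correction is absorbed into the border. So $\csum(S[\Omega_1,\Omega_2]) = -\beta^2 \det(\text{bordered matrix})$, and the bordered matrix's determinant equals (up to a sign from rearranging the border row/column into position) a determinant of a submatrix of $\widehat{S}$ of size $\eta + 1$ wait — one has to be careful about index bookkeeping here; the cleaner route is to note $\widehat{S}[\Omega_1,\Omega_2] = S[\Omega_1,\Omega_2] + \frac1n \1_\eta \1_\eta'$ and use (P9): $\det(\widehat{S}[\Omega_1,\Omega_2]) = \det(S[\Omega_1,\Omega_2]) + \frac1n \1'\adj(S[\Omega_1,\Omega_2])\1 = \det(S[\Omega_1,\Omega_2]) + \frac1n \csum(S[\Omega_1,\Omega_2])$. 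Second, apply the Jacobi identity (P6) to the nonsingular matrix $\widehat{S}$: $\det(\widehat{S}^{-1}[\Omega_2^c,\Omega_1^c]) = (-1)^{\alpha(\Omega_1)+\alpha(\Omega_2)} \frac{1}{\det(\widehat{S})}\det(\widehat{S}[\Omega_1,\Omega_2])$. Third, relate $\widehat{S}^{-1}[\Omega_2^c,\Omega_1^c] = S^\dag[\Omega_2^c,\Omega_1^c] + \frac1n J$ and $\det(\widehat S) = n\gamma$ (the latter from $\widehat S = S + \frac1n\1\1'$ and (P9): $\det(\widehat S) = \det(S) + \frac1n\1'\adj(S)\1 = 0 + \frac1n \cdot n^2\gamma = n\gamma$, since $\det(S)=0$ and every entry of $\adj(S)$ equals the common cofactor $\gamma$). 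Finally, assemble: the $\frac1n J$ terms must either be shown to contribute nothing or to cancel — I expect that because $S^\dag\1 = 0$ one of the two bordered determinants has its rank-one piece annihilated, leaving $\det(\widehat S^{-1}[\Omega_2^c,\Omega_1^c])$ and $\det(S^\dag[\Omega_2^c,\Omega_1^c])$ differing by a controlled amount, and similarly on the $S$ side the $\csum$ term survives while a stray determinant of $S[\Omega_1,\Omega_2]$ may appear — which is problematic since that determinant need not vanish.

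The cleanest way to avoid stray terms is to do all the bordering simultaneously on both sides. Specifically, I would write $\det(\widehat{S}[\Omega_1,\Omega_2])$ using Lemma \ref{1.4} applied to $S^\dag[\Omega_2^c,\Omega_1^c]$ as well: border $S^\dag[\Omega_2^c,\Omega_1^c]$ by $\frac1n\1$'s to get a matrix equal to $\widehat{S}^{-1}[\Omega_2^c \cup \{*\}, \Omega_1^c \cup \{*\}]$ arranged by moving the complementary index into the border slot, whose determinant is then $\pm\frac{1}{n\gamma}\det(\widehat S[\Omega_1,\Omega_2])$ by Jacobi with the roles of $\Omega$ and $\Omega^c$ swapped appropriately. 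Setting the two expressions for the bordered-$S^\dag$ determinant equal — one from Lemma \ref{1.4} giving $-\frac{1}{n^2}\csum(S^\dag[\Omega_2^c,\Omega_1^c])$... no, I want $\csum(S[\Omega_1,\Omega_2])$ on one side. So the right pairing is: Lemma \ref{1.4} gives $\csum(S[\Omega_1,\Omega_2]) = -n^2\det(A)$ where $A$ is $S[\Omega_1,\Omega_2]$ bordered by $\frac1n\1$'s; then I identify $A$ with a submatrix of $\widehat S^{-1}$ — not $\widehat S$ — by observing that bordering by $\frac1n\1$ is exactly the operation that inverts the $+\frac1n J$ shift, so that $\det(A) = \det$ of the corresponding $(\eta{+}1)\times(\eta{+}1)$ submatrix, and Jacobi relates this to $\det(S^\dag[\Omega_2^c,\Omega_1^c])$ directly with the sign $(-1)^{\alpha(\Omega_1)+\alpha(\Omega_2)}$ and the factor $n\gamma$ from $\det(\widehat S)$. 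Collecting the powers of $n$: one factor $n^2$ from Lemma \ref{1.4}, one factor $\frac1{n\gamma}$... this does not immediately give $n^2\gamma$, so I will need to track a further factor of $n^3\gamma^2$ or reconcile via $\gamma = \det(\widehat S)/n$; the main obstacle is precisely this sign-and-scalar bookkeeping together with justifying that the border rows/columns of $\frac1n\1$ genuinely realize the submatrix of $\widehat S^{\pm1}$ after permuting the extra index into place (which costs a sign $(-1)^{\text{something involving }\alpha}$ that must combine correctly with Jacobi's sign). I expect everything to collapse to the stated $(-1)^{\alpha(\Omega_1)+\alpha(\Omega_2)} n^2\gamma\det(S^\dag[\Omega_2^c,\Omega_1^c])$, but verifying the constant and the sign is the delicate part and is where I would spend the most care.
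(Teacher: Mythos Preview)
Your approach via $\widehat S := S + \tfrac{1}{n}J$ hits a genuine obstruction, which you sense but do not resolve. Combining (P9) with Jacobi for $\widehat S$ produces a \emph{single} linear relation among four quantities: $\det(S[\Omega_1,\Omega_2])$, $\csum(S[\Omega_1,\Omega_2])$, $\det(S^\dag[\Omega_2^c,\Omega_1^c])$, and $\csum(S^\dag[\Omega_2^c,\Omega_1^c])$. One equation cannot isolate the desired identity, and the ``stray'' terms do not cancel. Your fallback --- identifying the bordered $S[\Omega_1,\Omega_2]$ with a submatrix of $\widehat S^{-1}$ --- fails outright: the bordered object is $(\eta{+}1)\times(\eta{+}1)$, whereas every submatrix of the $n\times n$ matrix $\widehat S^{-1}=S^\dag+\tfrac1nJ$ has the form $S^\dag[\cdot,\cdot]+\tfrac1nJ$, never a bordering of $S[\cdot,\cdot]$.

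The paper's move is to border at the level of the \emph{full} matrix. Set
\[
A=\begin{bmatrix} S & \tfrac{1}{\sqrt n}\,\1 \\ \tfrac{1}{\sqrt n}\,\1' & 0 \end{bmatrix},
\]
an $(n{+}1)\times(n{+}1)$ nonsingular matrix whose inverse is the same bordering with $S^\dag$ in the top-left block. With $\Delta_i:=\Omega_i\cup\{n{+}1\}$, the submatrix $A[\Delta_1,\Delta_2]$ is exactly the matrix of Lemma~\ref{1.4} (with $\beta=\sqrt n$), so $\csum(S[\Omega_1,\Omega_2])=-n\det(A[\Delta_1,\Delta_2])$; and since $n{+}1\notin\Delta_i^c$, one has $A^{-1}[\Delta_2^c,\Delta_1^c]=S^\dag[\Omega_2^c,\Omega_1^c]$ \emph{exactly}, with no rank-one correction. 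Jacobi on $A$ then links the two directly, and $\det(A)=-n\gamma$ (Lemma~\ref{1.4} again, applied to all of $S$) supplies the constant. The asymmetry --- index $n{+}1$ sits on the $\Delta$ side but is absent from $\Delta^c$ --- is precisely what eliminates the stray terms you were fighting. (Your $\widehat S$ route can be salvaged: replace $\tfrac1nJ$ by $tJ$, note $(S+tJ)^{-1}=S^\dag+\tfrac{1}{tn^2}J$ and $\det(S+tJ)=tn^2\gamma$, carry Jacobi through as an identity of Laurent polynomials in $t$, and match the coefficients of $t$; the linear term yields the lemma and the constant term yields its dual for $S^\dag$.)
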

\begin{proof}
Let \[
   A:=
  \left[ {\begin{array}{cc}
   S & \frac{1}{\sqrt{n}} \1 \\
   \frac{1}{\sqrt{n}} \1' & 0 \\
  \end{array} } \right].
\]
Then $A$ is non-singular and  in fact, 
\begin{equation} \label{nsingA}
   A^{-1}=
  \left[ {\begin{array}{cc}
   S^{\dag} & \frac{1}{\sqrt{n}} \1 \\
   \frac{1}{\sqrt{n}}\1' & 0 \\
  \end{array} } \right].
\end{equation}
Define $\widetilde{S}:=S[\Omega_1,\Omega_2]$.
By Lemma ${\ref{1.4}}$,  
\begin{equation} \label{csuml}
\csum(\widetilde{S})= -n \det\Bigg(\left[ {\begin{array}{cc}
   \widetilde{S} & \frac{1}{\sqrt{n}}\1_{\eta }\\
   \frac{1}{\sqrt{n}}  \1'_{\eta} & 0 \\
  \end{array} } \right]\Bigg).
  \end{equation}
Define $$\Delta_1:=\Omega_{1} \cup \{n+1\}  ~~~\mbox{and}~~~\Delta_2:=\Omega_2 \cup \{n+1\}.$$
Then, 
 \[ A[\Delta_1,\Delta_2]=\left[ {\begin{array}{cc}
   \widetilde{S} & \frac{1}{\sqrt{n}}\1_{\eta }\\
   \frac{1}{\sqrt{n}}  \1'_{\eta} & 0 \\
  \end{array} } \right] .\]
  By rewriting equation $(\ref{csuml})$, we have
\begin{equation}\label{eqn1}
\csum(\widetilde{S})= -n \det(A[\Delta_1,\Delta_2]).
\end{equation}
By Jacobi's formula (P6)
\begin{equation}\label{eqn2}
\det(A[\Delta_1,\Delta_2]) = (-1)^{\alpha(\Omega_1)+\alpha(\Omega_2)} \frac{\det(A^{-1}[\Delta_2^{c}, \Delta_1^c])}{\det(A^{-1})}.
\end{equation}
From $(\ref{eqn1})$ and $(\ref{eqn2})$, we get
\begin{equation}\label{eqn23}
\csum(\widetilde{S})= (-1)^{\alpha(\Omega_1)+\alpha(\Omega_2)+1} n \frac{\det(A^{-1}[\Delta_2^c, \Delta_1^{c}])}{\det(A^{-1})}.
\end{equation}
 Using equation $(\ref{nsingA})$,
 \begin{equation}\label{eqn40}
  A^{-1}[\Delta_{2}^{c},\Delta_{1}^{c}]= S^{\dag}[\Omega_2^c ,\Omega_1^c].
 \end{equation}
Again applying Lemma ${\ref{1.4}}$, $$\det(A) = -\frac{1}{n}\csum(S) = -n\gamma.$$ where $\gamma$ is the common cofactor value of $S$. So, 
\begin{equation}\label{eqn41}
\det(A^{-1})= -\frac{1}{n\gamma}.
\end{equation}
By  $(\ref{eqn23})$,$(\ref{eqn40})$ and $(\ref{eqn41})$, 
\[ \csum(\widetilde{S})= (-1)^{\alpha(\Omega_1)+\alpha(\Omega_2)} n^2 \mathit{\gamma} \det(S^\dag [\Omega_2^c ,\Omega_1^c]). \]
The proof is complete.
\end{proof}

\begin{lem}\label{1.7}
Let A be a ${n}\times{n}$ matrix and let $P = I - \frac{1}{n}\1\1'$.  
Define $S:=PAP$. Then,
 \[\csum(A) = \csum(S) ~~\mbox{and} ~~\csum (A[\Omega_1,\Omega_2])=\csum (S[\Omega_1,\Omega_2]). \]
\end{lem}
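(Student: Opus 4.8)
Lemma \ref{1.7} asserts two things: that the cofactor sum of $A$ equals the cofactor sum of $S := PAP$, and that the same holds for every pair of submatrices indexed by $\Omega_1,\Omega_2$ of equal size. The plan is to reduce the submatrix statement to the full-matrix statement, and to prove the full-matrix statement by a direct rank-one-update computation on determinants.

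The key observation I would start from is that $P = I - \tfrac1n\1\1'$ acts on a matrix $A$ by subtracting, from each column, the average of that column's entries, and likewise for rows. Concretely, $PAP = A - \tfrac1n\1\1'A - \tfrac1n A\1\1' + \tfrac1n(\1'A\1)\tfrac1n\1\1'$. The effect on cofactor sums is what we must track. Here the cleanest route is the identity $\csum(B) = \1' \adj(B) \1$ together with the formula $\det(B + \1 v') = \det B + v'\adj(B)\1$ (this is (P9)). I would proceed in two stages: first show that adding any matrix of the form $\1 v'$ or $v\1'$ to $A$ leaves $\csum$ unchanged, i.e. $\csum(A + \1v') = \csum(A)$ and $\csum(A + u\1') = \csum(A)$. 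For the first, expand $\adj(A+\1v')$ — or better, note that $(A+\1v')$ and $A$ agree after we perform the row operation of subtracting row-combinations, but that changes the matrix; instead I would use the bordered-matrix trick of Lemma \ref{1.4}: $\csum(B) = -\beta^2\det\!\begin{bmatrix}B & \tfrac1\beta\1\\ \tfrac1\beta\1' & 0\end{bmatrix}$. Replacing $B$ by $B+\1v'$ adds $\1v'$ to the top-left block, which is a rank-one update of the bordered matrix by $\begin{bmatrix}\1\\0\end{bmatrix}\begin{bmatrix}v\\0\end{bmatrix}'$; but $\begin{bmatrix}\1\\0\end{bmatrix}$ is (a multiple of) the last column direction only up to the border — more carefully, adding a multiple of a vector lying in the column space spanned by the last column of the bordered matrix to the other columns doesn't change the determinant. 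Since the last column of the bordered matrix is $\tfrac1\beta(\1;0)'$... wait, that has a $0$ in the last slot, so $(\1;0)'$ is exactly $\beta$ times the last column. Hence adding $(\1;0)(v;0)'$ to the bordered matrix is an elementary column operation (add $\beta v_j$ times the last column to column $j$), which preserves the determinant. The same works for row updates. This handles $\csum(A + \1v') = \csum(A + u\1') = \csum(A)$.

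Then, applying this twice: $PAP = A - \1(\tfrac1n\1'A) - (\tfrac1n A\1)\1' + (\tfrac1{n^2}\1'A\1)\1\1'$. Group the last term with one of the others, e.g. write $PAP = \big(A - (\tfrac1nA\1)\1'\big) - \1\big(\tfrac1n\1'A - \tfrac1{n^2}(\1'A\1)\1'\big)$; both corrections are of the form $u\1'$ and $\1v'$ respectively, so $\csum(PAP) = \csum(A)$. This proves the first assertion. For the submatrix assertion I would observe that $(PAP)[\Omega_1,\Omega_2]$ is obtained from $A[\Omega_1,\Omega_2]$ by subtracting from each of its columns the full-column average $\tfrac1n\1'A$ restricted to rows $\Omega_1$, and similarly for rows — but the subtracted vectors are $\tfrac1n\1'_n A$ and $\tfrac1n A\1_n$ restricted, which do not depend on $\Omega_1$ or $\Omega_2$ individually, so they are still constant vectors times $\1'$ or $\1$ on the submatrix. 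Precisely, $(PAP)[\Omega_1,\Omega_2] = A[\Omega_1,\Omega_2] - \1_\eta w' - z\1_\eta' + c\1_\eta\1_\eta'$ for suitable $w,z,c$; applying the already-established invariance of $\csum$ under rank-one $\1v'$ and $u\1'$ updates (now for $\eta\times\eta$ matrices) gives $\csum((PAP)[\Omega_1,\Omega_2]) = \csum(A[\Omega_1,\Omega_2])$.

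The main obstacle is purely bookkeeping: writing $(PAP)[\Omega_1,\Omega_2]$ explicitly and confirming every correction term genuinely has the form (column vector)$\cdot\1'$ or $\1\cdot$(row vector) on the submatrix — in particular that the "average" vectors $\tfrac1n\1_n'A$ and $\tfrac1nA\1_n$, once restricted to the index sets, still multiply an all-ones vector of the correct length. There is no analytic difficulty; the only care needed is that the restriction of $\1_n\1_n'$ to $\Omega_1\times\Omega_2$ is $\1_\eta\1_\eta'$ and that the restriction of a rank-one $uv'$ to $\Omega_1\times\Omega_2$ is $u|_{\Omega_1}\,(v|_{\Omega_2})'$, so rank-one-ness and the "all ones on one side" structure are both preserved. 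Since the paper states "The proof is immediate," a short version simply records these two reductions.
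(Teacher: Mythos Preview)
Your proposal is correct and follows essentially the same architecture as the paper: expand $PAP$ as $A$ plus terms of the form $u\1'$, $\1 v'$, and a scalar multiple of $\1\1'$, establish that $\csum$ is invariant under such rank-one perturbations, and then observe that restriction to $\Omega_1\times\Omega_2$ preserves this structure (since $(uv')[\Omega_1,\Omega_2]=u|_{\Omega_1}(v|_{\Omega_2})'$). The one minor difference is in how you justify the invariance step: the paper uses (P9) directly, computing $\csum(B+x\1_\eta')=\det(B+(x+\1_\eta)\1_\eta')-\det(B+x\1_\eta')$ and expanding both sides, whereas you invoke Lemma~\ref{1.4} and note that the rank-one update becomes an elementary column (or row) operation on the bordered matrix; both arguments are short and valid.
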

\begin{proof}
We begin by noting that
\begin{equation} \label{cofsumd}
\begin{aligned}
S &= (I - \frac{1}{n}\1\1')A(I - \frac{1}{n}\1\1') \\ &= A - \frac{1}{n}A\1 \1'- \frac{1}{n}\1 \1'A+  \frac{\1'A\1}{n^2}(\1 \1').
\end{aligned}
\end{equation}
Let $e:=\1_\eta$.
By $(\ref{cofsumd})$, 
\begin{equation} \label{cofsumd1}
S[\Omega_1,\Omega_2]=A[\Omega_1,\Omega_2]+ue'+ev'+ \beta ee', 
\end{equation}
for some vectors $u$, $v$ in $\rr^\eta$ and for some real scalar $\beta$. 
We now claim that if $x \in \rr^\eta$, and if $B$ is an
$\eta \times \eta$ matrix, then $$\csum(B + x\1_{\eta}') = \csum(B).$$ Using (P9), we get
\begin{equation}\label{eqn4}
\begin{aligned}
  \csum(B + x\1_\eta') &= \1'\adj(B + x\1_\eta')\1 \\ &= \det(B + x\1_\eta'+\1_\eta\1_\eta')-\det(B + x\1_\eta') \\ &= \det(B + (x+\1_\eta)\1_\eta')-\det(B + x\1_\eta') \\ &= \det(B)+\1_\eta'\adj(B)(x+\1_\eta)-\det(B)-\1_\eta'\adj(B)x \\ &= \1_\eta'\adj(B)\1_\eta= \csum(B). 
\end{aligned}
 \end{equation}
Similarly, we see that
\begin{equation}\label{eqn24}
\begin{aligned}
\csum(B + \1_\eta x') = \csum(B).
\end{aligned}
\end{equation}  
Repeatedly using $(\ref{eqn4})$ and $(\ref{eqn24})$ in $(\ref{cofsumd})$ and $(\ref{cofsumd1})$, we obtain
\[\csum(A) = \csum(S) ~~\mbox{and} ~~\csum (A[\Omega_1,\Omega_2])=\csum (S[\Omega_1,\Omega_2]). \]
This completes the proof.
\end{proof}
By Lemma \ref{1.5} and \ref{1.7}, we now obtain the following result. 
\begin{thm}\label{1.6}
Let $S$ be an $n \times n$ matrix such that $\rank(S)=n-1$, $S\1=0$ and $S'\1=0$. Define $D = [d_{ij}]$ by
\[D= \diag(S)J+J\diag(S)-2S.\] 
Then, 
\[\csum(D[\Omega_1,\Omega_2]) = (-1)^{\alpha(\Omega_1)+\alpha(\Omega_2)+\eta-1} 2^{\eta-1} n^2 \mathit{\gamma} \\ \det(S^\dag [\Omega_2^c,\Omega_1^c]),\]
where $\gamma$ is the common cofactor value of $S$.
\end{thm}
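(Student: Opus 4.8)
The plan is to reduce the statement to Lemma \ref{1.5} by a conjugation-by-projection argument of exactly the kind used in Lemma \ref{1.7}. First I would apply Lemma \ref{1.7} with $A := D$ (so that the matrix called $S$ in that lemma is $PDP$, with $P = I - \frac{1}{n}\1\1'$). This gives
\[
\csum(D[\Omega_1,\Omega_2]) = \csum\big((PDP)[\Omega_1,\Omega_2]\big),
\]
so it suffices to identify $PDP$.

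Next I would compute $PDP$ directly from $D = \diag(S)J + J\diag(S) - 2S$. Since $P\1 = 0$ we have $PJ = JP = 0$, which annihilates the first two summands after conjugation: $P\big(\diag(S)J\big)P = \big(P\diag(S)\big)(JP) = 0$ and $P\big(J\diag(S)\big)P = (PJ)\big(\diag(S)P\big) = 0$. For the last summand, the hypotheses $S\1 = 0$ and $S'\1 = 0$ give $JS = \1(\1'S) = \1(S'\1)' = 0$ and $SJ = (S\1)\1' = 0$, hence $PS = SP = S$ and therefore $PSP = S$. Combining, $PDP = -2S$.

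Then I would invoke the elementary scaling behaviour of cofactor sums: for an $\eta \times \eta$ matrix $B$ and a scalar $c$, each cofactor of $cB$ is the determinant of a $(\eta-1)\times(\eta-1)$ submatrix of $cB$ (up to sign), hence equals $c^{\eta-1}$ times the corresponding cofactor of $B$; thus $\csum(cB) = c^{\eta-1}\csum(B)$. Applying this with $c = -2$ and $B = S[\Omega_1,\Omega_2]$, together with the previous paragraph, yields
\[
\csum(D[\Omega_1,\Omega_2]) = (-2)^{\eta-1}\,\csum\big(S[\Omega_1,\Omega_2]\big).
\]
Finally, since $\rank(S) = n-1$, $S\1 = 0$ and $S'\1 = 0$, Lemma \ref{1.5} applies verbatim to $S$ and gives $\csum(S[\Omega_1,\Omega_2]) = (-1)^{\alpha(\Omega_1)+\alpha(\Omega_2)}\, n^2 \gamma \det(S^\dag[\Omega_2^c,\Omega_1^c])$. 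Substituting this and writing $(-2)^{\eta-1} = (-1)^{\eta-1}2^{\eta-1}$ so as to absorb the sign into the exponent produces exactly the asserted identity.

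This argument is essentially calculation-free, so there is no serious obstacle; the only points requiring mild care are the sign bookkeeping when merging $(-2)^{\eta-1}$ with the sign coming from Lemma \ref{1.5}, and checking that the two rank-one-per-side terms in $D$ genuinely vanish under conjugation by $P$ (which is where $PJ = JP = 0$ is used) rather than merely simplifying. It is worth remarking that applying the theorem with $S := L^\dag$ recovers the resistance matrix $R$ of a strongly connected balanced digraph, since then $D = R$, $S^\dag = L$ and $\gamma = \kappa(G)$, giving the promised generalization of Theorem \ref{csumR} to directed graphs.
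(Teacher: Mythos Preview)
Your proof is correct and follows the paper's argument essentially verbatim: compute $PDP=-2S$ (the paper states this without spelling out $PJ=JP=0$ and $PS=SP=S$ as you do), invoke Lemma~\ref{1.7} to transfer cofactor sums, scale by $(-2)^{\eta-1}$, and finish with Lemma~\ref{1.5}.

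One small correction to your closing remark: when you specialize to $S:=L^\dag$, the common cofactor value $\gamma$ of $L^\dag$ is \emph{not} $\kappa(G)$ but rather $\gamma = \dfrac{1}{n^{2}\kappa(G)}$ (this is computed in the paper's proof of Theorem~\ref{rcofsum} via the bordered-matrix trick of Lemma~\ref{1.4}); it is the combination $n^{2}\gamma$ that collapses to $1/\kappa(G)$, giving the clean formula.
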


\begin{proof}
Pre and post multiplying by $P$ in the equation 
\[ D=\diag(S)J+J \diag(S) -2 S,\] we have
\[ PDP=-2S.\]
Thus, by Lemma $\ref{1.7}$, 
\begin{equation}\label{eqn25}
\begin{aligned}
\csum(S[\Omega_1,\Omega_2]) &= \csum ( -\frac{1}{2}D[\Omega_1,\Omega_2]) \\ &=  \Big(\frac{-1}{~~2}\Big)^{\eta-1} \csum(D[\Omega_1,\Omega_2]).
\end{aligned}
\end{equation}
Using Lemma ${\ref{1.5}}$ in $(\ref{eqn25})$, we get
\begin{equation*}
\csum(D[\Omega_1,\Omega_2]) = (-1)^{\alpha(\Omega_1)+\alpha(\Omega_2)+\eta-1} 2^{\eta-1} n^2 \mathit{\gamma} \det(S^\dag [\Omega_2^c,\Omega_1^c]).
\end{equation*}
The proof is complete. 
\end{proof}

It can be noted that Theorem \ref{csumR}  follows from Theorem \ref{1.6} immediately.  Applying Theorem \ref{csumR}  to resistance matrices of strongly connected balanced directed graphs, we get the following.
\begin{thm}\label{rcofsum}
Let $G$ be a strongly connected balanced directed graph with vertex set $\{1,2,\ldots,n\}$, Laplacian matrix $L$ and resistance matrix $R$. Then the following items hold.
\begin{enumerate}
    \item[(i)] $\csum(R[\Omega_1,\Omega_2]) = (-1)^{\alpha(\Omega_1)+\alpha(\Omega_2)+\eta-1}  \frac{2^{\eta-1}}{\kappa(G)}\det(L [\Omega_2^c,\Omega_1^c]).$
\item[(ii)] For every distinct $i,j \in \{1,2,..,n\}$,
\[r_{ij}+r_{ji}  = \frac{2}{\kappa(G)}\det(L[\{i,j\}^c,\{i,j\}^c]).\]
\end{enumerate}
\end{thm}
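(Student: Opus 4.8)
The plan is to read off both statements from Theorem \ref{1.6}, applied to $S=L^{\dag}$ rather than to $L$ itself. The key observation is that, by \eqref{req}, $R=\diag(L^{\dag})J+J\diag(L^{\dag})-2L^{\dag}$, so $R$ is precisely the matrix ``$D$'' associated by Theorem \ref{1.6} to the choice $S=L^{\dag}$. First I would verify the hypotheses of Theorem \ref{1.6} for this $S$: since $G$ is strongly connected and balanced, (P7) gives $L\1=L'\1=0$ and $\rank(L)=n-1$, and then (P8) gives $L^{\dag}\1=(L^{\dag})'\1=0$, $\rank(L^{\dag})=n-1$, and $(L^{\dag})^{\dag}=L$; in particular all cofactors of $L^{\dag}$ are equal. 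Hence Theorem \ref{1.6} yields
\[
\csum(R[\Omega_1,\Omega_2])=(-1)^{\alpha(\Omega_1)+\alpha(\Omega_2)+\eta-1}\,2^{\eta-1}\,n^{2}\gamma\,\det(L[\Omega_2^{c},\Omega_1^{c}]),
\]
where $\gamma$ is the common cofactor value of $L^{\dag}$.

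The only real computation is to show $\gamma=\dfrac{1}{n^{2}\kappa(G)}$. I would obtain this from the bordered-matrix device of Lemma \ref{1.5}: applying the identity $\det(A)=-n\gamma$ from that proof both to $S=L^{\dag}$ and to $S=L$, and noting that \eqref{nsingA} (with the roles of $L$ and $L^{\dag}$ interchanged, which is legitimate since $(L^{\dag})^{\dag}=L$) exhibits the two associated bordered matrices as inverses of one another, gives $n^{2}\gamma\,\gamma_{L}=1$, where $\gamma_{L}$ is the common cofactor value of $L$. Finally, the $(i,i)$ cofactor of $L$ equals $\det(L[\{i\}^{c},\{i\}^{c}])$, which by the matrix--tree theorem (P11) equals $\kappa(G,i)=\kappa(G)$; so $\gamma_{L}=\kappa(G)$ and $\gamma=1/(n^{2}\kappa(G))$. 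Substituting this into the displayed identity and cancelling the factor $n^{2}$ gives part (i).

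For part (ii) I would specialise part (i) to $\Omega_1=\Omega_2=\{i,j\}$. Then $\eta=2$, the exponent $\alpha(\Omega_1)+\alpha(\Omega_2)=2(i+j)$ is even, so the sign is $(-1)^{\eta-1}=-1$ and $2^{\eta-1}=2$; also $R[\{i,j\},\{i,j\}]=\left[\begin{smallmatrix}0&r_{ij}\\ r_{ji}&0\end{smallmatrix}\right]$, whose cofactor sum is $r_{ii}+r_{jj}-r_{ij}-r_{ji}=-(r_{ij}+r_{ji})$. Part (i) then reads $-(r_{ij}+r_{ji})=-\frac{2}{\kappa(G)}\det(L[\{i,j\}^{c},\{i,j\}^{c}])$, which is the asserted identity.

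I do not anticipate a genuine obstacle here: the theorem is essentially a specialisation of Theorem \ref{1.6}. The only point needing care is recognising that the correct instance is $S=L^{\dag}$ (so that the theorem's ``$D$'' is $R$ and its ``$S^{\dag}$'' is $L$), and then tracking the common cofactor value $\gamma$ of $L^{\dag}$ precisely enough that the stray $n^{2}$ disappears; the reciprocal relation $n^{2}\gamma\,\gamma_{L}=1$ between the common cofactors of $L^{\dag}$ and $L$ is the small lemma that makes the constants match.
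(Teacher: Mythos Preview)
Your proposal is correct and follows essentially the same route as the paper: apply Theorem~\ref{1.6} with $S=L^{\dag}$ (so that $D=R$ and $S^{\dag}=L$), then identify the common cofactor $\gamma$ of $L^{\dag}$ via the bordered matrices of Lemma~\ref{1.4}/\ref{1.5}, whose determinants are reciprocals since the two bordered matrices are inverses, yielding $n^{2}\gamma\,\kappa(G)=1$. Part (ii) is obtained in both arguments by specialising (i) to $\Omega_1=\Omega_2=\{i,j\}$ and reading off $\csum(R[\{i,j\},\{i,j\}])=-(r_{ij}+r_{ji})$.
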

\begin{proof}
\begin{enumerate}
    \item[(i)] Since 
\[R = \diag(L^\dag)J+J\diag(L^\dag)-2L^\dag, \] by Theorem $\ref{1.6}$ it follows that
\begin{equation}\label{eqn36}
\csum(R[\Omega_1,\Omega_2]) = (-1)^{\alpha(\Omega_1)+\alpha(\Omega_2)+\eta-1} 2^{\eta-1} n^2 \mathit{\delta} \\ \det(L [\Omega_2^c,\Omega_1^c]),
\end{equation} 
where $\mathit{\delta}$ is the common cofactor value of $L^\dag$. Let \[
   A:=
  \left[ {\begin{array}{cc}
   L & \frac{1}{\sqrt{n}} \1 \\
   \frac{1}{\sqrt{n}} \1' & 0 \\
  \end{array} } \right].
\]
Then $A$ is non-singular and,
\begin{equation*} \label{nsingA2}
   A^{-1}=
  \left[ {\begin{array}{cc}
   L^\dag & \frac{1}{\sqrt{n}} \1 \\
   \frac{1}{\sqrt{n}}\1' & 0 \\
  \end{array} } \right].
\end{equation*}
By Lemma $\ref{1.4}$, we have $\csum(L^\dag) = -n \det(A^{-1})$ and $\csum(L) = -n \det(A)$. Thus, 
\begin{equation*}\label{eqn32}
\csum(L) = \frac{n^2}{\csum(L^\dag)} = \frac{1}{\delta}.
\end{equation*}
and hence
\begin{equation}\label{eqn31}
\kappa(G) = \frac{\csum(L)}{n^2} = \frac{1}{n^2 \delta}.
\end{equation}
By $(\ref{eqn36})$ and $(\ref{eqn31})$, we have
\[\csum(R[\Omega_1,\Omega_2]) = (-1)^{\alpha(\Omega_1)+\alpha(\Omega_2)+\eta-1}  \frac{2^{\eta-1}}{\kappa(G)}\det(L [\Omega_2^c,\Omega_1^c]).\]
The proof of (i) is complete.
\item[(ii)] Let $i,j \in \{1,2,..,n\}$ be such that $i \neq j$. Substituting $\Omega_1 = \Omega_2 = \{i,j\}$ in (i), we get
\begin{equation}\label{eqn28}
\csum(R[\Omega_1,\Omega_2]) = (-1)^{2i+2j+1}  \frac{2}{\kappa(G)}\det(L [\Omega_2^c,\Omega_1^c]).
\end{equation}
As $\csum (R[\Omega_1,\Omega_2])=-(r_{ij}+r_{ji})$, by $(\ref{eqn28})$, 
\[r_{ij}+r_{ji} = \frac{2}{\kappa(G)}\det(L[\{i,j\}^c,\{i,j\}^c]).\]
This completes the proof of (ii).
\end{enumerate}
\end{proof}
To illustrate the above theorem, we present the following example.
\begin{exmm} \rm
Consider the directed graph $G$ on four vertices given in Figure \ref{csumeg}(a).
\begin{figure}[!h]
~~~~~\subfloat[]{\begin{tikzpicture}[shorten >=1pt, auto, node distance=3cm, ultra thick,
   node_style/.style={circle,draw=black,fill=white !20!,font=\sffamily\Large\bfseries},
   edge_style/.style={draw=black, ultra thick}]
\node[vertex] (1) at  (0,0) {$1$};
\node[vertex] (2) at  (2,0) {$2$};
\node[vertex] (3) at  (2,-2) {$3$};
\node[vertex] (4) at  (0,-2) {$4$};
\draw[edge]  (1) to (2);
\draw[edge]  (2) to (3);
\draw[edge]  (3) to (4);
\draw[edge]  (4) to (1);
\draw[edge]  (1) to (3);
\draw[edge]  (1.6,-1.8) to (0.2,-0.4);
\end{tikzpicture}}
~~~~~~~~~~~~~~
\subfloat[]{\begin{tikzpicture}[shorten >=1pt, auto, node distance=3cm, ultra thick,
   node_style/.style={circle,draw=black,fill=white !20!,font=\sffamily\Large\bfseries},
   edge_style/.style={draw=black, ultra thick}]
\node[vertex] (1) at  (0,0) {$1$};
\node[vertex] (2) at  (2,0) {$2$};
\node[vertex] (3) at  (2,-2) {$3$};
\node[vertex] (4) at  (0,-2) {$4$};
\draw[edge]  (1) to (2);
\draw[edge]  (2) to (3);
\draw[edge]  (3) to (4);
\end{tikzpicture}}
~~~~~~~~~~~~~~
\subfloat[]{\begin{tikzpicture}[shorten >=1pt, auto, node distance=3cm, ultra thick,
   node_style/.style={circle,draw=black,fill=white !20!,font=\sffamily\Large\bfseries},
   edge_style/.style={draw=black, ultra thick}]
\node[vertex] (1) at  (0,0) {$1$};
\node[vertex] (2) at  (2,0) {$2$};
\node[vertex] (3) at  (2,-2) {$3$};
\node[vertex] (4) at  (0,-2) {$4$};
\draw[edge]  (2) to (3);
\draw[edge]  (3) to (4);
\draw[edge]  (1) to (3);
\end{tikzpicture}}
\caption{(a) Graph $G$, (b) spanning tree $T_1$ and (c) spanning tree $T_2$} \label{csumeg}
\end{figure}
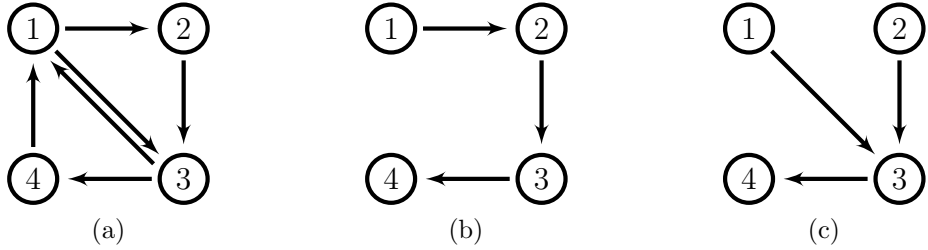
$G$ has two oriented spanning trees $T_1$ and $T_2$ rooted at vertex $4$ (see Figure \ref{csumeg}(b) and \ref{csumeg}(c)). Thus, $\kappa(G) = 2$.
The Laplacian and resistance matrices of $G$ are
\begin{equation*}
L = \left[
{\begin{array}{rrrrrr}
2 & -1 & -1 & 0 \\
0 & 1 & -1 & 0 \\
-1 & 0 & 2 & -1 \\
-1 & 0 & 0 & 1
\end{array}}
\right]
~ \text{and}~
R = 
\left[
{\begin{array}{rrrrrr}
0 & \frac{3}{4} & \frac{1}{2} & \frac{5}{4} \\
\frac{5}{4} & 0 & \frac{3}{4} & \frac{3}{2} \\
\frac{1}{2} & \frac{5}{4} & 0 & \frac{3}{4} \\
\frac{3}{4} & \frac{3}{2} & \frac{5}{4} & 0
\end{array}}
\right].
\end{equation*} Let $\Omega_1=\{1,2\}$  and
$\Omega_{2}=\{1,4\}$. Now,
\begin{equation*}
R[\Omega_1,\Omega_2] = \left[
{\begin{array}{rrrrrr}
0 & \frac{5}{4} \\
\frac{5}{4} & \frac{3}{2}
\end{array}}
\right], ~~ \csum(R[\Omega_1,\Omega_2]) = -1, 
\end{equation*}
\begin{equation*}
L[\Omega_2^c, \Omega_1^c] = \left[
{\begin{array}{rrrrrr}
-1 & 0 \\
2 & -1
\end{array}}
\right],~~
\det(L[\Omega_2^c, \Omega_1^c] ) = 1 ~\text{and}~ \alpha(\Omega_1)+\alpha(\Omega_2)+\eta-1 = 9.
\end{equation*}
Hence,
\begin{equation*}
 (-1)^{\alpha(\Omega_1)+\alpha(\Omega_2)+\eta-1} \frac{2^{\eta-1}}{\kappa(G)}\det(L [\Omega_2^c,\Omega_1^c]) = -1 = \csum(R[\Omega_1,\Omega_2]).
\end{equation*}
\end{exmm}

\section*{Acknowledgements}
The first author is supported by Department of science and Technology -India under the project MATRICS (MTR/2017/000342).

\end{document}